\documentclass[12pt]{amsart}

\usepackage{graphicx}
\usepackage{amsthm}
\usepackage{amsmath, amssymb}
\swapnumbers

\theoremstyle{plain}
\newtheorem{Thm}{Theorem}

\newtheorem{Coro}[Thm]{Corollary}
\newtheorem{Lem}[Thm]{Lemma}
\newtheorem{Claim}{Claim}

\theoremstyle{definition}
\newtheorem{Def}[Thm]{Definition}

\newcommand{\mS}{\mathcal{S}}
\newcommand{\mB}{\mathcal{B}}

\begin{document}

\title{One-sided and two-sided Heegaard splittings}

\author{Jesse Johnson}
\address{\hskip-\parindent
        Department of Mathematics \\
        Oklahoma State University \\
        Stillwater, OK 74078
        USA}
\email{jjohnson@math.okstate.edu}

\subjclass{Primary 57M}
\keywords{Heegaard splitting, one-sided, mapping class group}

\thanks{Research supported by NSF grant DMS-1006369}

\begin{abstract}
We define a notion of Hempel distance for one-sided Heegaard splittings and show that the existence of alternate surfaces restricts distance for one-sided splittings in a manner similar to Hartshorn's and Scharlemann-Tomova's results for two-sided splittings. We also show that every geometrically compressible one-sided Heegaard surface in a non-Haken 3-manifold is stabilized, and show that the mapping class group of the two-sided Heegaard splitting induced by a high distance one-sided splitting is isomorphic to the fundamental group of the one-sided surface.
\end{abstract}

\maketitle

Let $M$ be a compact, connected, closed, orientable 3-manifold.  A surface $O$ embedded in $M$ is \textit{one sided} if a regular neighborhood $U$ of $O$ is homeomorphic to a twisted interval bundle, i.e. a bundle of intervals over $O$ that is not a product $O \times [0,1]$.  A \textit{one sided Heegaard surface} for $M$ is a one-sided surface $O \subset M$ such that the complement in $M$ of the regular neighborhood $U$ is a handlebody.  We can also extend this handlebody into $U$ and think of it as an immersed handlebody with embedded interior whose boundary double covers $O$.

Hempel~\cite{hempel} defined the \textit{distance} of a two-sided Heegaard surface $\Sigma$ as the distance in the curve complex for $\Sigma$ between the sets of loops bounding disks in either handlebody. We will define the \textit{distance} $d(O)$ for a one-sided Heegaard surface as the distance in the curve complex for the surface $S = \partial U$ from the set of loops bounding disks in the complement of $U$ to the set of loops that are in the boundary of an essential annulus or M\"obious band in $U$.

One goal of this paper is to show that this is a good notion of distance for a one-sided Heegaard surface. We construct one-sided splittings with arbitrarily high distance by mimicking the corresponding construction for two-sided splittings.  However, the main reason that this appears to be the correct notion of distance is an analogue of Hartshorn's~\cite{hartshorn} and Scharlemann-Tomova's~\cite{st:dist} ``distance bounds genus'' results:

\begin{Thm}
\label{mainthm1}
If $O$ is a one-sided Heegaard surface with distance $d(O)$ and $T$ is either an incompressible two-sided surface or a one-sided Heegaard surface with Euler characteristic $\chi(T)$ then either $d(O) \leq \chi(T)$ or $T$ is a stabilization of $O$.
\end{Thm}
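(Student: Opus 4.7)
The plan is to follow Hartshorn's distance-bounds-genus strategy, supplemented by Scharlemann--Tomova refinements, with a new lemma adapted to the twisted $I$-bundle $U = \nu(O)$. Write $S = \partial U$, $H = M \setminus \operatorname{int}(U)$, and let $\mathcal{D}(H), \mathcal{A}(U) \subset \mathcal{C}(S)$ denote the disk set of $H$ and the set of boundaries of essential annuli and M\"obius bands in $U$, so $d(O)$ is the $\mathcal{C}(S)$-distance between these two sets.

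After putting $T$ in general position with $S$, I would first arrange that $T \cap S$ consists of loops essential on both surfaces. In the incompressible case, inessential curves on $T$ bound disks that can be surgered off in the standard way; in the one-sided Heegaard case one uses the handlebody-and-$I$-bundle decomposition associated to $T$ to cancel inessential intersections, at the cost (in bad cases) of a stabilization. If this process ends with $T$ disjoint from $S$, then $T \subset H$ or $T \subset U$, and a direct analysis in each place either rules out the incompressible alternative or identifies $T$ as a stabilization of $O$.

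With $T \cap S \neq \emptyset$ and essential, decompose $T \setminus S$ into $H$-pieces and $U$-pieces, whose Euler characteristics sum to $\chi(T)$. For each essential $H$-piece $P$, a standard outermost-disk argument yields a compressing disk of $H$ whose boundary is within $\mathcal{C}(S)$-distance controlled by $|\chi(P)|$ of $\partial P$. For each essential $U$-piece $Q$, the analogue requires a new lemma: there is an essential annulus or M\"obius band in $U$ whose boundary is close in $\mathcal{C}(S)$ to $\partial Q$. Here one exploits the $I$-fibration $U \to O$ to isotope $Q$ into vertical/horizontal position and extract a suitable vertical annulus or M\"obius band from the image of $Q$ in $O$.

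Chaining the resulting endpoints through the adjacency graph of pieces (adjacent $H$- and $U$-pieces share an intersection curve, hence are at $\mathcal{C}(S)$-distance one) produces a path from $\mathcal{D}(H)$ to $\mathcal{A}(U)$ whose length is controlled by the Euler characteristic of $T$, yielding the advertised bound on $d(O)$; the cases in which no such path can be produced are precisely those in which the earlier reductions forced a modification witnessing $T$ as a stabilization of $O$. I expect the main obstacle to be the twisted $I$-bundle lemma: the set $\mathcal{A}(U)$ is more rigid than the usual handlebody disk set, so extracting an essential annulus or M\"obius band near $\partial Q$ cannot rely on handlebody-style outermost-disk surgery and must instead invoke the $I$-fibration structure together with the fact that $S \to O$ is a double cover.
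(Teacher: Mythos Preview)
Your outline follows a direct Hartshorn-style cut-and-count argument, whereas the paper takes a rather different technical route: it works in the product region $N=\Sigma\times[0,1]$ between a neighborhood of $O$ and a neighborhood of a spine $K$ of the handlebody, isotopes $T\cap N$ to a \emph{tight flat surface} (a union of horizontal subsurfaces and vertical annuli), and reads off a path in $\mathcal C(\Sigma)$ from the sequence of horizontal levels. The Euler-characteristic bookkeeping is then automatic, since an essential tight flat surface has no horizontal disks or annuli and every horizontal piece contributes at most $-1$. Your piecewise scheme could in principle be made to work, but the two control lemmas you invoke (a disk of $H$ close to $\partial P$ for each $H$-piece $P$, and a vertical annulus/M\"obius band close to $\partial Q$ for each $U$-piece $Q$) are not standard in this form and would each need a proof; the paper's flat-surface machinery packages both into one argument.

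The more serious gap is in the one-sided Heegaard case. You write that inessential intersections can be cancelled ``at the cost (in bad cases) of a stabilization,'' but there is no mechanism offered for why a modification of $T$ that removes trivial curves should ever witness $T$ as a stabilization of $O$. The paper handles this by a genuinely separate step: it first replaces the one-sided Heegaard surface $T$ by the result of \emph{maximally} geometrically compressing it, so that the surface entering the distance argument is geometrically incompressible and can be treated exactly like the two-sided incompressible case. The only new outcome is that this compressed surface may end up isotopic to $O$; the conclusion that the \emph{original} $T$ is then a stabilization of $O$ is precisely Lemma~\ref{mainlem} (``compressing a one-sided Heegaard surface down to another one implies the first is a stabilization of the second''), which is a nontrivial result proved via the complex of surfaces and was conjectured by Bartolini--Rubinstein. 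Your proposal does not isolate or supply any analogue of this lemma, and without it the stabilization alternative in the theorem has no source.
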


By a \textit{stabilization} of a one-sided Heegaard surface, we mean the following: Given a closed ball $B \subset M$ that intersects $O$ in a single disk, let $O'$ be the surface that result from replacing the disk $O \cap B$ with an unknotted once-punctured torus. The resulting surface is still one sided and the complement of its regular neighborhood will be a handlebody of genus two greater than the genus of the original handlebody.  The surface $O'$ and any surface that comes from repeating this construction is called a \textit{stabilization} of $O$.  Any one-sided Heegaard surface that results from stabilizing a (different) one-sided Heegaard surface is called \textit{stabilized}.

A key step in proving Theorem~\ref{mainthm1} is the following Lemma, which was conjectured by Bartolini and Rubinstein:

\begin{Lem}
\label{mainlem}
If $O$ and $O'$ are one-sided Heegaard surfaces such that  $O'$ results from repeatedly compressing $O$ then $O$ is a stabilization of $O'$.
\end{Lem}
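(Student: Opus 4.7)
The plan is to induct on the number of compressions, reducing to the case where $O'$ is obtained from $O$ by a single compression along a disk $D$ with $\partial D \subset O$ essential. After pushing $D$ into the handlebody complement $H = M \setminus U$, the boundary $\partial D$ becomes an essential curve on the double cover $S = \partial U$. Under the compression, $U$ is modified to $U' \simeq U \cup N(D)$ and $H$ to $H' \simeq H \setminus N(D)$, and since $O'$ is by hypothesis a one-sided Heegaard surface, $H'$ must be a handlebody and $U'$ must carry a twisted $I$-bundle structure over $O'$. These two conditions, together with the way $\partial D$ interacts with the covering involution $\tau : S \to S$ of $S \to O$, supply the key structural constraints.

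The crux of the argument is to construct a M\"obius band or annulus $B$ properly embedded in $U$ whose boundary on $S$ meets $\partial D$ transversely in a single point. The existence of this \emph{dual object} is the one-sided analogue of the classical two-sided destabilization criterion (disks on opposite sides meeting in a single point force a stabilization). Once such a pair $(D,B)$ is produced, a regular neighborhood $N(D \cup B)$ together with a collar of the annular region of $O$ near $\partial D$ forms a ball $B_0 \subset M$ whose intersection with $O$ is an unknotted once-punctured torus and whose intersection with $O'$ is a single disk. By the definition given in the introduction, this exhibits $O$ as a stabilization of $O'$, and iterating this for each of the compressions in the sequence completes the induction.

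The main obstacle is establishing the existence of the dual band $B$. One promising approach is to pass to the orientation double cover $\tilde M \to M$ determined by $O$, which exists because $O$ one-sided forces $[O]$ to be nontrivial in $H_2(M; \mathbb{Z}/2)$. In $\tilde M$ the surface $S$ becomes a genuine two-sided Heegaard surface splitting $\tilde M$ into two copies of $H$, and the compression $O \to O'$ lifts to a symmetric pair of compressions of $S$ exchanged by the deck involution. Applying the classical two-sided destabilization theorem to this equivariant compression and descending the resulting canceling disks back to $M$ should produce the required pair $(D,B)$. The delicate point is an equivariance argument ensuring that the object descending from the other handlebody in $\tilde M$ lies in $U$ as a M\"obius band or annulus rather than as a pair of disks in $H$; this is where the twisted $I$-bundle structure on $U'$ (and not merely its topology) is essential.
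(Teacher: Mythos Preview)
Your induction is where the argument breaks down. You want to reduce to the case of a single compression, but the hypothesis of the lemma only tells you that the \emph{endpoints} $O=O_0$ and $O'=O_k$ of the compression sequence are one-sided Heegaard surfaces. The intermediate surfaces $O_1,\dots,O_{k-1}$ need not have handlebody complements, so you cannot apply your single-compression argument to any of the individual steps $O_{i-1}\to O_i$. This is not a technicality: the paper explicitly identifies the nested-compression situation as the entire difficulty, and its proof is devoted to reorganizing such a sequence via the complex of surfaces so that classical Heegaard theory (Waldhausen, Casson--Gordon) in a handlebody can be brought to bear at the end.

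Even restricted to a single compression with both $O$ and $O'$ Heegaard, the sketch has gaps. In the double cover $\tilde M = H_1\cup_S H_2$ both handlebodies project to the \emph{same} handlebody $H\subset M$; nothing on the ``other side'' in $\tilde M$ descends to the interval bundle $U$. So the equivariance issue you flag is not merely delicate --- the mechanism cannot produce a M\"obius band or annulus in $U$ in the way you suggest. Relatedly, in the local model of a one-sided stabilization (a ball meeting $O$ in an unknotted punctured torus) the two dual compressing disks lie on opposite sides of the punctured torus but in the \emph{same} ambient complementary region $H$, so the correct dual object is a second disk in $H$ interacting with the two lifts $\ell_1,\ell_2\subset S$ of $\partial D$, not a vertical band in $U$. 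Finally, your claim that $N(D\cup B)$ together with a collar of $\partial D$ is a ball is not established; with $N(B)$ a solid torus, the union you describe is a solid torus rather than a ball unless further structure is imposed and verified.

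The paper's route is entirely different: it encodes the compression sequence as a path in an augmented complex of surfaces $\mS^O(M)$, uses the barrier and Casson--Gordon axioms to slide and thin that path, extracts from the resulting fan a Heegaard surface for the handlebody complement of $O'$, and only then invokes the classification of Heegaard splittings of a handlebody to find the reducing spheres that exhibit $O$ as a stabilization of $O'$.
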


Note that the inclusion map from $\pi_1(O)$ to $\pi_1(M)$ will have a very large kernel, so $O$ cannot be incompressible in the algebraic sense.  By a compression, we mean a geometric compression: we will say that $O$ is \textit{geometrically compressible} if there is an embedded disk in $M$ whose boundary is an essential simple closed curve in $O$. Rubinstein and Bartolini~\cite{bart2}\cite{bart4}\cite{bart3}\cite{bart1}\cite{Ru} have made significant progress understanding geometrically incompressible one-sided Heegaard surfaces by playing off both their incompressible and their compressible nature. Birman-Rubinstein~\cite{RuBi} used one-sided surfaces to make some of the first computations of homeotopy (mapping class) groups of certain 3-manifolds.

Every stabilized one-sided Heegaard surface is geometrically compressible. In a non-Haken 3-manifold, every geometrically incompressible one-sided surface is a one-sided Heegaard surface and the converse is true as well:

\begin{Coro}
\label{compcoro}
Every geometrically compressible one-sided Heegaard surface in a non-Haken 3-manifold is stabilized.
\end{Coro}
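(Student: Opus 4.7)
The plan is to reduce the corollary to Lemma~\ref{mainlem} by geometrically compressing $O$ repeatedly until one obtains a geometrically incompressible one-sided Heegaard surface $O'$ to which the lemma applies. Concretely, I would iterate the following step on the current embedded surface: if it has a 2-sphere component I discard it (every embedded 2-sphere in $M$ bounds a ball, since a non-Haken 3-manifold carrying a Heegaard splitting is irreducible); if it has a two-sided non-sphere component, that component must be geometrically compressible because $M$ is non-Haken, so I compress it; and if it has a geometrically compressible one-sided component I compress that. Each non-trivial compression strictly increases the total Euler characteristic, so the process terminates.

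The key invariant to track is the $\mathbb{Z}/2$-homology class carried by the surface, which is preserved both by geometric compressions and by the removal of ball-bounding spheres. Because $O$ is one-sided in the closed orientable manifold $M$, its Poincar\'e-dual $\mathbb{Z}/2$ class is non-zero, so $[O]\neq 0$ in $H_2(M;\mathbb{Z}/2)$; consequently at every stage of the process at least one surviving component must itself be one-sided. When the process terminates every remaining component is geometrically incompressible, so at least one of them is a geometrically incompressible one-sided surface $O'$. By the fact recorded in the paper immediately above the corollary (due to Rubinstein and Bartolini), any geometrically incompressible one-sided surface in a non-Haken 3-manifold is automatically a one-sided Heegaard surface, so $O'$ is a one-sided Heegaard surface obtained from $O$ by a sequence of compressions. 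Lemma~\ref{mainlem} then gives that $O$ is a stabilization of $O'$, and in particular $O$ is stabilized.

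The main obstacle is making sure that the compression process described above fits the precise hypothesis of Lemma~\ref{mainlem}, which speaks of compressing one one-sided Heegaard surface directly to another, whereas the intermediate stages of my process may have extra two-sided components and ball-bounding spheres. I would address this either by checking that the proof of Lemma~\ref{mainlem} goes through when one also allows the discarding of ball-bounding 2-spheres and the compression of auxiliary two-sided components, or by sharpening the $\mathbb{Z}/2$-homology argument to conclude that at every stage exactly one one-sided component survives (the other components being compressible two-sided surfaces or spheres that must eventually disappear), so that in the end $O'$ is obtained from $O$ by a clean sequence of compressions and the lemma applies without any reinterpretation.
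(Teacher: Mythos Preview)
Your approach is exactly the intended one, and in fact the paper gives no separate proof of this corollary: it is meant to follow immediately from Lemma~\ref{mainlem} together with the sentence just before the corollary (that in a non-Haken manifold every geometrically incompressible one-sided surface is a one-sided Heegaard surface). Compress $O$ until it is geometrically incompressible, observe that the result $O'$ is again a one-sided Heegaard surface, and apply Lemma~\ref{mainlem}.

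You are, if anything, being more careful than the paper. The concern you raise about intermediate surfaces acquiring two-sided components or spheres is real, and the paper's proof of Lemma~\ref{mainlem} does tacitly assume each $O_i$ in the compression sequence is a connected one-sided surface. Both of your proposed fixes are reasonable; the second one goes through cleanly once you note that at the terminal stage there is \emph{exactly} one one-sided component: two disjoint geometrically incompressible one-sided surfaces in a non-Haken $M$ would both be one-sided Heegaard surfaces, so one would sit inside the handlebody complement of the other, contradicting one-sidedness since a handlebody has trivial $H_2(\,\cdot\,;\mathbb{Z}/2)$. One small point worth tightening: the inference ``$[O]\neq 0$ in $H_2(M;\mathbb{Z}/2)$, hence at every stage some surviving component is one-sided'' is not valid in an arbitrary orientable $3$-manifold (a two-sided torus in $T^3$ carries a nonzero class). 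Here it uses the non-Haken hypothesis, which forces every closed orientable surface to compress to spheres and hence to be $\mathbb{Z}/2$-null-homologous. You invoke non-Haken elsewhere, so this is just a matter of making the dependence explicit at that step.
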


From any one-sided Heegaard surface $O$, we can construct a two-sided Heegaard surface $\Sigma'$ by attaching a vertical tube to the doubled surface $\Sigma = \partial U$ inside the interval bundle $U$. This Heegaard surface has distance at most 2, so the standard results about high distance Heegaard splittings (such as~\cite{st:dist}) don't apply to it. However, we will use the distance of the one-sided splitting to understand $\Sigma'$ as follows:

\begin{Thm}
\label{2sidedthm1}
If $\Sigma'$ is induced from a one-sided Heegaard surface $O$ then every genus $g$ Heegaard surface $T \subset M$ with $2g \leq d(O)$ is a stabilization of $\Sigma'$.
\end{Thm}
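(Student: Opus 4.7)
The approach is a Rubinstein--Scharlemann style sweepout comparison between the two-sided Heegaard surface $T$ and the one-sided structure coming from $O$, together with an application of Theorem~\ref{mainthm1} to rule out the distance-bounding case.

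First I would set up the comparison. Let $\{T_s\}_{s\in[0,1]}$ be a sweepout of $M$ by parallel copies of $T$, with $T_0, T_1$ the spines of the two handlebodies bounded by $T$. Simultaneously, I would use a sweepout of $M$ adapted to the decomposition $M = U\cup H$: handlebody levels in $H$, and levels inside $U$ built from the vertical $I$-bundle structure with a vertical tube added, so that the generic level is $\Sigma'$ while a singular level realises $O$. Putting these in general position yields a graphic on $[0,1]^2$, with regions labelled by whether $T_s$ lies above, below, or spans each side of the $\Sigma'$-sweepout.

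Second, I would run the standard graphic analysis. The usual dichotomy produces either (a) a ``spanning'' pair of levels, from which one extracts a common stabilization of $T$ and $\Sigma'$ of bounded genus, or (b) an untelescoping of the comparison that yields an essential two-sided incompressible surface $F\subset M$ with $-\chi(F)\le 2g-2$, as in Scharlemann--Tomova and Hayashi--Shimokawa.

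Third, I would rule out case (b) via Theorem~\ref{mainthm1}. Since $-\chi(F)\le 2g-2 < d(O)$, the theorem forces $F$ to be a stabilization of $O$; but stabilizations of $O$ are one-sided and geometrically compressible, contradicting that $F$ is two-sided and incompressible. Hence only case (a) survives.

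Finally, from a common stabilization $T^\star$ of $T$ and $\Sigma'$ one must deduce that $T$ itself is a stabilization of $\Sigma'$. Here I would use the explicit structure: the extra handles making $T^\star$ out of $\Sigma'$ live in a controlled way relative to the $I$-bundle $U$ and the vertical tube; combined with Reidemeister--Singer uniqueness and Lemma~\ref{mainlem} on the one-sided side to absorb compressions, the handles can be pushed into the $\Sigma'$-side, exhibiting $T$ as a stabilization of $\Sigma'$. The main obstacle I expect is precisely step two: the $U$-side of the auxiliary sweepout is not a handlebody but a twisted $I$-bundle, so the standard Rubinstein--Scharlemann machinery does not apply verbatim and must be adapted so that the distance that appears in the resulting bounds is $d(O)$ in the curve complex of $\Sigma=\partial U$, rather than $d(\Sigma')$ which is at most $2$ and useless here.
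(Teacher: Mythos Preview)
Your approach and the paper's are both sweepout comparisons in spirit, but the paper uses a different technical framework and, more importantly, its endgame is quite different from yours. The paper represents $T$ by a path in the complex of surfaces $\mS(M,H)$ relative to a neighborhood $H$ of a spine of $M\setminus O$, weakly reduces, and then analyzes the resulting index-zero or index-one surface directly using the ``essential flat surfaces with flipped squares'' machinery (Lemmas~\ref{indexesslem} and~\ref{essdist2lem}). The dichotomy that comes out is: either an incompressible thin level exists (handled by Theorem~\ref{mainthm1}, as you do), or the index-one surface $T$ must be disjoint from $U$ or from $H$. The first is impossible in a handlebody; in the second case $T$ sits in the twisted $I$-bundle $M\setminus H$, and the paper then argues \emph{directly}, via band moves and a classification of Heegaard surfaces in that $I$-bundle, that $T$ is isotopic to the standard Heegaard surface $\Sigma'$.

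This is where your plan has a genuine gap. Your case~(a) produces only a common stabilization $T^\star$ of $T$ and $\Sigma'$, and you then appeal to Reidemeister--Singer and Lemma~\ref{mainlem} to ``push the handles to the $\Sigma'$-side.'' But a common stabilization always exists, so by itself it carries no information; what you actually need is the stronger conclusion that $T$ is isotopic into the $I$-bundle and is there forced to be the standard splitting. Neither Reidemeister--Singer nor Lemma~\ref{mainlem} (which concerns one-sided surfaces related by geometric compressions) gives you this. The paper gets it by a separate argument: compress $T$ toward $\partial U$ using horizontal disks until it becomes $\partial U$, then observe that reattaching a single vertical tube already yields a Heegaard surface, and any further tubes would stabilize it (contradicting strong irreducibility). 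You would need an analogous step, and that is exactly the point where the ``$U$ is not a handlebody'' obstacle you flagged becomes essential rather than merely technical: the spanning analysis in the graphic has to be reworked so that its output is ``$T$ lies in the $I$-bundle,'' not ``$T$ and $\Sigma'$ have a common stabilization.''
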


A similar Theorem (with a slightly weaker bound) follows from Tao Li's much more general results about gluing manifolds together along their boundaries~\cite{li}. However, we will develop techniques here that also allow us to understand the mapping class group of the induced Heegaard splitting.

The \textit{mapping class group} $Mod(M, \Sigma')$ is the group of self-homeomomorphisms of $M$ that take $\Sigma'$ onto itself, modulo isotopies of $M$ that preserve $\Sigma'$ setwise. The Heegaard surfaces constructed this way form one family in a relatively short list of types of Heegaard surfaces with irreducible automorphisms classified by the author and Hyam Rubinstein~\cite{jr:mcgs}. Because the tube used to construct $\Sigma'$ from $\Sigma$ can be attached along any vertical interval in $U$, elements of $Mod(M, \Sigma')$ can be constructed by dragging this tube around a path in $O$. So $Mod(M, \Sigma')$ contains a subgroup isomorphic to $\pi_1(O)$. If the distance of $O$ is sufficiently high, we will show that this is the entire mapping class group:

\begin{Thm}
\label{2sidedthm2}
If $\Sigma'$ has genus $g$ and is induced by a one-sided Heegaard surface $O$ with $2g < d(O)$ then the mapping class group $Mod(M, \Sigma')$ is isomorphic to the fundamental group $\pi_1(O)$.
\end{Thm}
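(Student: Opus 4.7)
The plan is to prove that the tube-dragging homomorphism $\pi_1(O)\to Mod(M,\Sigma')$ of the statement is bijective. Injectivity can be tested against a concrete object: in $H_1 = V\cup\text{tube}$, the core $\alpha$ of the tube is an arc joining two points of $\Sigma$, and a tube-drag along a loop $\gamma\subset O$ based at $p = \pi(\alpha)$ changes the isotopy class of $\alpha$ rel $\Sigma$ by concatenation with the lift of $\gamma$ to $U$. Distinct classes in $\pi_1(O,p)$ yield non-isotopic arcs, so the kernel is trivial.

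For surjectivity, let $f:M\to M$ represent $\phi\in Mod(M,\Sigma')$ and let $D\subset H_1$ be the meridian disk of the tube, so that compressing $\Sigma'$ along $D$ recovers $\Sigma=\partial U$, a double cover of $O$. Then compressing $\Sigma'$ along $f(D)$ produces $f(\Sigma)$, which is the boundary of the twisted interval bundle over $f(O)$; in particular $f(O)$ is a one-sided Heegaard surface of $M$ with the same Euler characteristic as $O$. Theorem~\ref{mainthm1} applied to $T=f(O)$ offers two alternatives; the hypothesis $2g<d(O)$ rules out the distance alternative, so $f(O)$ must be a stabilization of $O$. Since $f(O)$ and $O$ have equal genus, $f(O)$ is then isotopic to $O$ in $M$.

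After an ambient isotopy of $M$ we may therefore assume $f(O)=O$, and hence $f(U)=U$ and $f(\Sigma)=\Sigma$. The image $f(\alpha)$ is another vertical arc in $U$ whose projection picks out some point of $O$; composing $f$ with a tube-drag along a path from $p$ to that point, we may further assume $f$ preserves the tube itself, the meridian $D$, and the basepoint $p\in O$.

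What remains---and this is the main obstacle---is to show that any such $f$ is isotopic to the identity through self-homeomorphisms preserving $\Sigma'$. I would analyze $f|_\Sigma$: it commutes with the covering involution $\Sigma\to O$, fixes $\partial D$, and extends over both $V$ and $U$ rel the tube. Combining the large one-sided distance $d(O)$ with a curve-complex argument in the spirit of those underlying Theorems~\ref{mainthm1} and~\ref{2sidedthm1}, applied equivariantly with respect to the $\mathbb{Z}/2$-action, should force $f|_\Sigma$ to be isotopic to the identity; an Alexander-type extension over $V$ and over $U$ rel the tube then completes the argument. Marrying the one-sided distance bound to this $\mathbb{Z}/2$-equivariance is where I expect the bulk of the technical work to live.
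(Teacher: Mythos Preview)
Your approach diverges substantially from the paper's, and the place you flag as ``the main obstacle'' is a genuine gap rather than a routine technicality.

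The paper does not try to show that an arbitrary $f\in Mod(M,\Sigma')$ can be straightened to preserve $O$ and then argue about $f|_O$.  Instead it works with the isotopy subgroup and uses the machinery built in Sections~\ref{twosidesect}--\ref{twosideboundsect}: an element of $Isot(M,\Sigma')$ corresponds to a path in $\mathcal{S}(M,H)$ whose maxima have index two.  Lemmas~\ref{indexesslem} and~\ref{essdist2lem} (the flat-surface-with-flipped-squares analysis) combined with the hypothesis $2g<d(O)$ force any such index-two surface to lie in the interval bundle $M\setminus H$; but the interval bundle contains no closed index-two surfaces, so the path has no index-two maxima at all.  This means the isotopy never leaves the interval bundle, and tracking the disk $E=O\cap(\text{compression body})$ gives a well-defined homomorphism $Isot(M,\Sigma')\to\pi_1(O)$.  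Injectivity then falls out immediately because after fixing $E$ the induced isotopy of $\Sigma$ is an isotopy of a boundary-parallel surface, hence trivial.

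Your route bypasses all of this machinery and instead reduces (via Theorem~\ref{mainthm1}) to the statement: \emph{if $f$ preserves $O$, the tube, the meridian $D$, and the basepoint, then $f$ is isotopic to the identity through maps preserving $\Sigma'$.}  This is equivalent to asserting that the mapping class group of the one-sided splitting $(M,O)$ fixing a point is trivial when $d(O)$ is large.  That is a theorem in its own right, not a consequence of anything already proved in the paper, and your sketch (``a curve-complex argument \ldots\ applied equivariantly with respect to the $\mathbb{Z}/2$-action \ldots\ should force $f|_\Sigma$ to be isotopic to the identity'') is a hope, not an argument.  The analogous statement for two-sided splittings requires real work (Namazi, Johnson), and the one-sided/equivariant version is not obviously easier.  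There is also a smaller wrinkle: the ambient isotopy you invoke to arrange $f(O)=O$ need not preserve $\Sigma'$, so you must explain why composing with it does not change the class of $f$ in $Mod(M,\Sigma')$, or rephrase the reduction more carefully.

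In short: your use of Theorem~\ref{mainthm1} to pin down $f(O)$ is legitimate, but the endgame you propose is a separate rigidity theorem you have not proved.  The paper avoids this by never leaving the isotopy category and by exploiting the index-two flat-surface lemmas, which do the heavy lifting that your $\mathbb{Z}/2$-equivariant curve-complex argument would have to replicate.
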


The paper is organized as follows: In Section~\ref{distsect}, we give a more detailed description of the distance of a one-sided Heegaard surface and prove that there are one-sided Heegaard surfaces of arbitrarily high distance. In Section~\ref{flatsect}, we review the definition of flat surfaces, as introduced in~\cite{me:upperbnd}.  Sweep-outs are reviewed in Section~\ref{sweepsect}, and used to prove Theorem~\ref{mainthm1}. Lemma~\ref{mainlem} is proved in Section~\ref{surfacecplxsect}. A construction called a \textit{band move} is introduced in Section~\ref{twosidesect} and used to prove  Theorem~\ref{2sidedthm1} in Section~\ref{twosideboundsect}. Finally, Theorem~\ref{2sidedthm2} is proved in Section~\ref{mcgsect}.

I thank Loretta Bartolini for introducing me to one-sided Heegaard surfaces and dissuading me of my early, naive notions about them. I also thank Saul Schleimer for relating to me the details of constructing high distance one-sided Heegaard surfaces.

\section{Distance}
\label{distsect}

Given a one-sided Heegaard surface $O \subset M$, let $U$ be a closed regular neighborhood of $O$ and $\Sigma = \partial U$. The set $U$ is an interval bundle such that $O$ consists of the midpoints of the intervals and $\Sigma$ consists of the endpoints of the intervals. The map that sends the endpoints of each interval to the midpoint defines a covering map $c : \Sigma \rightarrow O$. (This is the orientation cover of the non-orientable surface $O$.)

The pre-image in $c$ of every simple closed curve in $O$ is either one or two loops in $\Sigma$. Let $\mathcal{O} \subset \mathcal{C}(\Sigma)$ be the set of loops that are in the pre-images of loops in $O$, where $\mathcal{C}(\Sigma)$ is the curve complex. The closure of the complement of $N$ is a handlebody whose boundary is also $\Sigma$. Define $\mathcal{H} \subset \mathcal{C}(\Sigma)$ as the set of simple closed curves that are boundaries of disks in $H$. Define the \textit{distance} of $O$ as $d(O) = d(\mathcal{O}, \mathcal{H})$, i.e.\ the shortest edge path distance in the curve complex from a loop in one set to a loop in the other.

We will see below that this distance has similar properties to the distance for a two-sided splitting. For example, note that if $O$ is geometrically compressible then the geometric compression defines an annulus in $U$ and a disk in $H$ with a common boundary loop so $d(O) = 0$. Conversely, we will check below that there are splittings of arbitrarily high distance.

The construction of high distance one-sided splittings is very similar to the construction of high distance two-sided splittings~\cite[Theorem 2.7]{hempel}, which Hempel attributes to Feng Luo and Luo attributes to Tsuyoshi Kobayashi~\cite{kob:loops}. The proof in this setting is complicated by the fact that we are measuring distance from the set of doubles of loops in $O$, but it still uses fairly standard machinery from Thurston's approach to surface automorphisms~\cite{thurston} and was transmitted to me by Saul Schleimer. Because this is not the main focus of the present paper, we will only outline the argument as a series of claims, with proofs left to the reader.

For the definitions of measured lamination, the spaces $ML(O)$, $ML(\Sigma)$ of measured laminations, the projective measured space spaces $PML(O)$, $PML(\Sigma)$, and other terminology, see Penner and Harer's book~\cite{penhar}. 

We have defined the set $\mathcal{O}$ in the curve complex for $\Sigma$ as the set of preimages of simple closed curves in $O$ under the covering map $c : \Sigma \rightarrow O$. We would like to lift laminations from $O$ to $\Sigma$ in a similar way.

\begin{Claim}
There is a well defined lifting map $c'_* : ML(O) \rightarrow ML(\Sigma)$ that is continuous, piecewise-linear, injective and equivariant under scaling of measures.
\end{Claim}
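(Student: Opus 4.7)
The plan is to define $c'_*$ on the dense subset of weighted simple closed curves in $ML(O)$ in the obvious way (matching the already-defined set $\mathcal{O}$), and then extend over train-track coordinates. For a weighted simple closed curve $t\gamma$ in $O$, set $c'_*(t\gamma)$ to be the weighted $1$-manifold $c^{-1}(\gamma)$ in $\Sigma$ with each component assigned weight $t$. Concretely, if $\gamma$ is two-sided in $O$, the preimage is two disjoint loops each weighted $t$; if $\gamma$ is one-sided, the preimage is a single loop double-covering $\gamma$, still weighted $t$. This matches the natural pullback of transverse measures under the local homeomorphism $c$, and places $c'_*(\gamma) \in \mathcal{O}$.

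Next I would extend via train tracks. Given a train track $\tau \subset O$, its preimage $c^{-1}(\tau)$ inherits the structure of a train track in $\Sigma$, since $c$ is a local homeomorphism and hence carries smooth structure and switch conditions to smooth structure and switch conditions. A branch weighting on $\tau$ satisfying the switch conditions lifts branch-by-branch to a switch-consistent branch weighting on $c^{-1}(\tau)$, giving a linear injection of weight cones. This defines $c'_*$ on every lamination carried by $\tau$, and agrees with the simple-closed-curve definition. Well-definedness across different carryings is handled by passing to a common refinement and using the fact that the lift commutes with splittings of train tracks; this is the only step requiring genuine bookkeeping, and is the one place where I expect the reader to have to be careful.

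With the map defined, the four properties drop out. Equivariance under scaling is immediate because on each weight cone $c'_*$ is linear. Piecewise-linearity is likewise immediate, since the PL structure on $ML(O)$ and $ML(\Sigma)$ (in the sense of Penner–Harer) is precisely assembled from the weight cones of train tracks, and $c'_*$ is linear on each such cone. Continuity follows from piecewise-linearity together with the compatibility of the cones.

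Finally, for injectivity, I would use the deck involution $\sigma$ of the cover $c : \Sigma \to O$. Any $\mu$ in the image of $c'_*$ is $\sigma$-invariant by construction, and conversely any $\sigma$-invariant measured lamination $\mu$ on $\Sigma$ descends via $c$ to a well-defined measured lamination $\lambda$ on $O$ (collapsing each $\sigma$-orbit and halving the weight where a one-sided loop appears downstairs). This descent provides a left inverse to $c'_*$, so $c'_*$ is injective. The main obstacle, as noted, is the verification that the lifting is independent of the train-track carrying $\tau$; everything else is formal naturality of the cover.
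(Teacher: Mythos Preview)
The paper does not actually prove this claim: the author explicitly writes that the argument is ``outlined as a series of claims, with proofs left to the reader,'' citing standard Thurston machinery and Penner--Harer. So there is nothing to compare against; your task is simply to supply a correct argument, and your outline does this in the expected way.

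Your approach is the standard one and is correct in substance: pull back laminations and transverse measures through the local homeomorphism $c$, realize this branchwise on lifted train tracks to get linearity on each weight cone, and deduce the PL/continuity/scaling properties from that cone-by-cone linearity. The injectivity argument via the deck involution $\sigma$ is also the right idea. One small correction: in your description of the left inverse you mention ``halving the weight where a one-sided loop appears downstairs,'' but with the pullback convention you set up (transverse measure pulled back through the local homeomorphism, so a weight-$t$ one-sided curve lifts to its connected double cover with weight $t$), no halving is needed---the descent of a $\sigma$-invariant transverse measure is obtained by measuring any one lift of a transversal, and $\sigma$-invariance makes this independent of the choice. The halving would only enter if you had instead defined the lift via pushforward of intersection numbers; just make sure your conventions for lift and descent are inverse to each other.
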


Because $c'_*$ is equivariant under scaling of measures, it defines a map between the projective measured lamination spaces.

\begin{Claim}
The induced map $c : PML(O) \rightarrow PML(\Sigma)$ is continuous, piecewise-linear and injective and its image is the closure of $\mathcal{O}$.
\end{Claim}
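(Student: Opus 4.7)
The plan is to derive the induced projective map's properties from the corresponding properties of $c'_*$ established in the previous claim, and then identify its image by sandwiching it between $\mathcal{O}$ and its closure.

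First I would observe that equivariance of $c'_*$ under scaling means it descends to a well defined map $c : PML(O) \to PML(\Sigma)$. Continuity and piecewise-linearity of the projective map are inherited directly from the corresponding properties of $c'_*$, since the projection $ML \setminus \{0\} \to PML$ is a continuous, piecewise-linear quotient. Injectivity of the projectivization follows from injectivity together with scaling equivariance: if $c'_*(\mu_1)$ and $c'_*(\mu_2)$ are projectively equal then $c'_*(\mu_1) = t\, c'_*(\mu_2) = c'_*(t \mu_2)$ for some $t > 0$, so injectivity of $c'_*$ gives $\mu_1 = t \mu_2$, i.e.\ $[\mu_1] = [\mu_2]$.

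Next I would handle the image. Since $PML(O)$ is compact and $c$ is continuous, the image $c(PML(O))$ is compact, hence closed in $PML(\Sigma)$. To see that this image contains $\mathcal{O}$, recall that a simple closed curve $\gamma \subset O$, equipped with the counting measure, is an element of $ML(O)$, and its preimage $c^{-1}(\gamma)$ is exactly the one or two loops in $\Sigma$ that define the corresponding element of $\mathcal{O}$. In other words the lifting map $c'_*$ sends the projective class of $\gamma$ to the projective class of its preimage, so $\mathcal{O} \subset c(PML(O))$. Because the image is closed, $\overline{\mathcal{O}} \subset c(PML(O))$.

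For the reverse inclusion I would use the standard density of (projective classes of) weighted simple closed curves in $PML(O)$ from Thurston's theory (see Penner--Harer). Given any $\lambda \in PML(O)$, choose simple closed curves $\gamma_n$ on $O$ with $[\gamma_n] \to \lambda$. Then $c([\gamma_n]) \in \mathcal{O}$ for every $n$, and by continuity of $c$ we get $c(\lambda) = \lim_n c([\gamma_n]) \in \overline{\mathcal{O}}$. This shows $c(PML(O)) \subset \overline{\mathcal{O}}$ and completes the proof that the image is exactly $\overline{\mathcal{O}}$.

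The only step that is not essentially formal is showing that the lift of a simple closed curve coincides with the corresponding element of $\mathcal{O}$, but this is immediate from how $\mathcal{O}$ was defined (preimages of simple closed curves under $c$) together with the compatibility built into $c'_*$. The main conceptual ingredient, apart from results already granted, is the density of weighted simple closed curves in $PML(O)$, which is standard.
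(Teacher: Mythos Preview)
Your argument is correct and is precisely the kind of argument the paper has in mind; note that the paper explicitly leaves the proofs of these claims to the reader, so there is no authorial proof to compare against. The only point worth flagging is that density of weighted simple closed curves in $PML(O)$ and compactness of $PML(O)$ for a non-orientable surface $O$ require the non-orientable version of Thurston's theory, but the paper's reference to Penner--Harer is intended to cover this.
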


Masur~\cite{masur} has shown that the closure in $PML(\Sigma)$ of the set $\mathcal{H}$ of boundaries of essential disks in $H$ has empty interior in $PML(\Sigma)$. By calculating the dimensions of $PML(\Sigma)$ and $PML(O)$, we find:

\begin{Claim}
If $O$ is not a projective plane or Klein bottle then the image in $PML(\Sigma)$ of $PML(O)$ has empty interior.
\end{Claim}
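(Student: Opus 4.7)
The plan is to execute a dimension count, exactly as the prose preceding the Claim advertises. Since the earlier Claim tells us that $c'_* \colon ML(O) \to ML(\Sigma)$ is continuous, piecewise-linear, injective, and scale-equivariant, the induced map $PML(O) \to PML(\Sigma)$ is a PL injection and its image is a PL subset whose (local) dimension is at most $\dim PML(O)$. A PL subset of a PL manifold $X$ whose dimension is strictly smaller than $\dim X$ cannot have interior in $X$, by the usual invariance-of-domain argument applied to simplices, so it suffices to verify the strict dimension inequality $\dim PML(O) < \dim PML(\Sigma)$ whenever $O$ is neither a projective plane nor a Klein bottle.

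To carry out the count, write $O = N_k$ for the closed non-orientable surface of non-orientable genus $k$, so that the hypothesis is $k \geq 3$. Its Euler characteristic is $\chi(O) = 2-k$, and the orientation cover $\Sigma$ is a closed orientable surface of genus $g$ with $\chi(\Sigma) = 2\chi(O) = 4-2k$, hence $g = k-1$. I would then quote the standard dimension formulas: for a closed orientable surface of genus $g \geq 2$,
\[
\dim ML(\Sigma) = 6g - 6 = 6k - 12,
\]
and for a closed non-orientable surface of genus $k \geq 3$,
\[
\dim ML(O) = 3k - 6.
\]
(Both can be seen from train-track coordinates; in the non-orientable case one can alternatively identify $c'_*(ML(O))$ with the fixed set of the PL involution of $ML(\Sigma)$ induced by the deck transformation of the cover $c$, whose fixed locus in each top-dimensional train-track cell of $ML(\Sigma)$ is cut out by the equations identifying the weights on paired branches and therefore has half the dimension.) Projectivizing subtracts one from each side, giving
\[
\dim PML(O) = 3k - 7 < 6k - 13 = \dim PML(\Sigma)
\]
for $k \geq 3$, which is the desired strict inequality.

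The only delicate points are the two excluded cases and the dimension formulas themselves. For $k = 1$ (projective plane) the space $PML(O)$ is empty, and for $k = 2$ (Klein bottle) one has $3k - 7 = -1$ with $PML(O)$ consisting of a single point (the unique isotopy class of two-sided essential curve, with its positive measures), so the inequality $\dim PML(O) < \dim PML(\Sigma)$ in fact continues to hold, but the dimension formula $\dim ML(N_k) = 3k-6$ is no longer the right one because $O$ is not hyperbolic; that is why these cases must be excluded from the hypothesis, although in fact the conclusion remains true. The main obstacle is thus not conceptual but bookkeeping: pinning down the correct formula for $\dim ML(N_k)$ in the non-hyperbolic range and being careful that the PL injection argument legitimately yields empty interior, both of which are routine once the involution picture on train-track coordinates is in hand.
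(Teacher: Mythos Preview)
Your proposal is correct and follows exactly the approach the paper indicates: the paper's entire argument is the sentence ``By calculating the dimensions of $PML(\Sigma)$ and $PML(O)$, we find\ldots,'' and you have carried out precisely that dimension count, obtaining $\dim PML(O) = 3k-7 < 6k-13 = \dim PML(\Sigma)$ for $k \geq 3$ and invoking the previous Claim's PL injectivity to conclude empty interior.
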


On the other hand, the set of stable and unstable laminations of pseudo-Anosov maps on $S$ is dense in $PML(\Sigma)$. We thus have:

\begin{Claim}
There is a pseudo-Anosov map $\phi : S \rightarrow S$ whose stable and unstable laminations are both disjoint from the closures of $\mathcal{O}$ and $\mathcal{H}$
\end{Claim}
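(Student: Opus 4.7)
The plan is to combine the two nowhere-density statements with the density of pseudo-Anosov fixed laminations via a standard ping-pong argument. By Masur's theorem and the previous claim, the union $\overline{\mathcal{O}} \cup \overline{\mathcal{H}}$ is a closed subset of $PML(\Sigma)$ with empty interior, so its complement $V := PML(\Sigma) \setminus (\overline{\mathcal{O}} \cup \overline{\mathcal{H}})$ is open and dense. My strategy is to produce the desired map as a composition of high powers of two auxiliary pseudo-Anosovs whose dynamics I can control on $PML(\Sigma)$.

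First I would use density of stable laminations of pseudo-Anosov maps in $PML(\Sigma)$ to pick a pseudo-Anosov $\phi_1 : \Sigma \to \Sigma$ whose stable lamination $\lambda_1^+$ lies in $V$. Similarly, since unstable laminations are stable laminations of inverses, I can pick a second pseudo-Anosov $\phi_2 : \Sigma \to \Sigma$ whose unstable lamination $\lambda_2^-$ lies in $V$. After replacing $\phi_2$ by a suitable mapping class conjugate if necessary (this is harmless, since the image of a nonempty open subset of $V$ under a homeomorphism is still open), I can arrange that the fixed-point sets $\{\lambda_1^+, \lambda_1^-\}$ and $\{\lambda_2^+, \lambda_2^-\}$ are disjoint in $PML(\Sigma)$ while keeping $\lambda_1^+, \lambda_2^- \in V$.

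The key step is then the usual ping-pong argument, exploiting the north-south dynamics of pseudo-Anosovs on $PML(\Sigma)$ recorded in Penner--Harer. For sufficiently large $n$ and $m$, the composition $\psi_{n,m} := \phi_1^n \circ \phi_2^m$ is again pseudo-Anosov, and its stable lamination converges to $\lambda_1^+$ while its unstable lamination converges to $\lambda_2^-$ as $n,m \to \infty$. Because $V$ is open and contains both limits, for $n$ and $m$ large enough the stable and unstable laminations of $\psi_{n,m}$ both lie in $V$ and hence are disjoint from $\overline{\mathcal{O}}$ and $\overline{\mathcal{H}}$. Taking $\phi := \psi_{n,m}$ for such $n,m$ proves the claim.

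I expect the main technical obstacle to be making the ping-pong step precise: one must verify uniform north-south contraction on compact subsets of $PML(\Sigma)$ away from the repelling fixed laminations, and keep track of which fixed laminations of the factors become the attracting and repelling laminations of the composition. These are classical facts but require care in choosing the neighborhoods and the exponents. The excluded cases of $O$ a projective plane or Klein bottle are not a concern here, since in those cases $\Sigma$ is a sphere or torus and supports no pseudo-Anosov at all, so the claim is only being asserted in the range where it is meaningful.
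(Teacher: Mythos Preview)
The paper does not actually prove this claim: it is one of a series of claims explicitly ``left to the reader,'' and the only hint offered is the sentence immediately preceding it, that ``the set of stable and unstable laminations of pseudo-Anosov maps on $S$ is dense in $PML(\Sigma)$.'' Your proposal correctly identifies that this density statement by itself does not automatically produce a \emph{single} pseudo-Anosov with \emph{both} its stable and its unstable lamination lying in the open dense set $V$, and your ping-pong argument is exactly the standard way to close that gap. So your write-up is in fact more complete than what the paper supplies, and the overall strategy is sound.

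One small point worth tightening: the conjugation step as you phrase it is slightly off. Conjugating $\phi_2$ by $g$ replaces $\lambda_2^-$ by $g(\lambda_2^-)$, and the fact that $g$ carries open sets to open sets does not by itself guarantee $g(\lambda_2^-)\in V$. It is cleaner simply to choose $\phi_2$ from the outset so that $\lambda_2^-$ lies in the still open, still dense set $V\setminus\{\lambda_1^+,\lambda_1^-\}$, and so that $\lambda_2^+\neq\lambda_1^{\pm}$ as well (a finite-point condition that density handles easily). With the four fixed laminations pairwise distinct, the north--south dynamics argument goes through exactly as you describe. An equivalent shortcut, if you prefer to cite rather than reprove, is to invoke the well-known fact that the set of pairs $(\lambda^+(\phi),\lambda^-(\phi))$ for $\phi$ pseudo-Anosov is dense in $PML(\Sigma)\times PML(\Sigma)$ off the diagonal; that is essentially what your ping-pong establishes.
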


As in~\cite{hempel}, we will compose the gluing map for a given one-sided Heegaard splitting with a sufficiently high power of a map $\phi$ whose stable and unstable laminations are disjoint from the closures in $PML(\Sigma)$ of $\mathcal{H}$ and $\mathcal{O}$.

\begin{Lem}
\label{exhighdistlem}
For every positive integer $K$, there is a 3-manifold $M'$ and a one-sided Heegaard surface $O' \subset M'$ such that $d(O') > K$.
\end{Lem}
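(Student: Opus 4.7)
The plan is to follow Hempel's template: start from some fixed one-sided Heegaard splitting $M = U \cup_{\iota} H$ (for instance any splitting whose one-sided surface $O$ has sufficient genus that the preceding claim applies), then build $M_n$ by pre-composing the gluing with a high power of the pseudo-Anosov $\phi$ from the fourth claim. Concretely, let $M_n = U \cup_{\phi^n \circ \iota} H$, and let $O_n$ be the resulting one-sided Heegaard surface. The $I$-bundle piece is still $U$, so the set $\mathcal{O} \subset \mathcal{C}(\Sigma)$ of loops covering essential curves in $O_n$ is literally the same subset as for the original splitting, while the set $\mathcal{H}_n$ of loops bounding disks in $H$, viewed inside $\Sigma = \partial U$ via the new gluing, is $\phi^n(\mathcal{H})$. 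So $d(O_n) = d(\mathcal{O},\phi^n\mathcal{H})$.

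The first task is the bookkeeping: verify that re-gluing with $\phi^n \circ \iota$ still yields a closed 3-manifold with a bona fide one-sided Heegaard splitting. This is immediate because $\phi$ is a self-homeomorphism of $\Sigma$, so neither $U$ nor $H$ changes as a 3-manifold with boundary, and the orientation double cover $c \colon \Sigma \to O_n$ is determined by $U$.

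The main content is to show $d(\mathcal{O},\phi^n \mathcal{H}) \to \infty$ as $n \to \infty$; choosing $n$ with this quantity greater than $K$ then finishes the lemma. By the fourth claim, neither the stable nor the unstable lamination $\lambda^\pm$ of $\phi$ lies in the $PML(\Sigma)$-closures of $\mathcal{O}$ or $\mathcal{H}$. By Masur--Minsky, $\phi$ acts on $\mathcal{C}(\Sigma)$ as a loxodromic isometry whose two fixed points on the Gromov boundary are $\lambda^+$ and $\lambda^-$. Combining these, the standard north--south dynamics argument (the one Luo and Kobayashi use in the two-sided setting, as the author notes) shows that both $\mathcal{O}$ and $\mathcal{H}$ project to uniformly bounded subsets of the $\phi$-axis, so that iterating $\phi$ drives $\phi^n \mathcal{H}$ linearly far along the axis away from $\mathcal{O}$; concretely one obtains a bound of the form $d(\mathcal{O},\phi^n\mathcal{H}) \geq cn - C$ with $c>0$.

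The genuinely delicate step is the last one: promoting ``the closures in $PML(\Sigma)$ avoid $\lambda^\pm$'' to ``the sets have bounded diameter projection to the axis in $\mathcal{C}(\Sigma)$''. The point is that a sequence of curves in $\mathcal{O}$ whose projections to the axis tend to $+\infty$ would have to accumulate (in $PML$) on $\lambda^+$, contradicting the assumption; one makes this precise via Klarreich's identification of the Gromov boundary of $\mathcal{C}(\Sigma)$ with minimal filling laminations, or (more in the spirit of the paper) via a direct intersection-number estimate using train track neighborhoods of $\lambda^\pm$. Since the author has already declared that detailed proofs of the supporting claims are left to the reader and attributes the argument to Schleimer/Luo/Kobayashi, citing~\cite{hempel, kob:loops} and the above dynamics suffices to conclude.
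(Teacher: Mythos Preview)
Your argument is correct and follows exactly the strategy the paper sketches: reglue along $\phi^n$ so that $\mathcal{O}$ stays fixed while $\mathcal{H}$ is replaced by $\phi^n\mathcal{H}$, then use Claim~4 to force $d(\mathcal{O},\phi^n\mathcal{H})\to\infty$. The paper itself gives no proof beyond the four claims and the sentence ``As in~\cite{hempel}, we will compose the gluing map \dots,'' so your write-up in fact supplies more detail than the paper does.

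The one difference worth noting is in how you finish. The argument the paper defers to (Hempel, via Luo and Kobayashi) stays entirely inside $PML(\Sigma)$: using only compactness of $PML$ and continuity of intersection number, one shows that if $d(\mathcal{O},\phi^n\mathcal{H})$ were bounded along a subsequence, a diagonal limit would place $\lambda^+$ or $\lambda^-$ in $\overline{\mathcal{O}}$ or $\overline{\mathcal{H}}$, contradicting Claim~4. You instead pass through Masur--Minsky hyperbolicity of $\mathcal{C}(\Sigma)$ and Klarreich's identification of $\partial\mathcal{C}(\Sigma)$ with filling laminations to get bounded projection to the $\phi$-axis. Both routes are standard and both work; yours is conceptually tidy but invokes heavier theorems, while the $PML$ route needs only the machinery the paper has already assembled in Claims~1--4. (Your ``delicate step'' is fine once you note that stable/unstable laminations of a pseudo-Anosov are uniquely ergodic, so any $PML$ accumulation point with the same support as $\lambda^+$ is $\lambda^+$ itself.)
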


\section{Flat surfaces}
\label{flatsect}

In this section, we review the definitions of flat and essential flat surfaces from~\cite{me:upperbnd}.  Let $\Sigma$ be a compact, connected, closed, orientable surface and let $N = \Sigma \times [0,1]$.  A \textit{vertical annulus} in $N$ is a surface of the form $\ell \times [a,b]$ where $\ell$ is a simple closed curve in $\Sigma$ and $[a,b]$ is a closed subinterval in $[0,1]$.  A \textit{horizontal subsurface} in $\Sigma \times [0,1]$ is a surface of the form $F \times \{a\}$ where $F \subset \Sigma$ is compact subsurface and $a \in [0,1]$.  We will say that a compact, properly embedded surface $S \subset \Sigma \times [0,1]$ is \textit{flat} if $S$ is the union of a collection of vertical annuli and horizontal subsurfaces such that any two vertical annuli in $S$ are disjoint.  In particular, the vertical annuli have pairwise disjoint boundary loops that coincide with the boundary loops of the horizontal surfaces.

In our case, $N$ will be the complement in $M$ of a regular neighborhood $U$ of the one-sided surface $O$ and a regular neighborhood $H$ of a spine $K$ for the handlebody bounded by $O$. By a \textit{spine}, we mean that the handlebody deformation retracts onto the graph $K$.  We will assume that $S$ is the intersection with $N$ of a closed surface $T$ that is transverse to $O$ and $K$, so that $T \setminus N$ consists of disks in $U$ and annuli and M\"obius bands in $H$. 

Let $\pi$ be the projection map from $N = \Sigma \times [0,1]$ to $[0,1]$.  If $S$ is any properly embedded, piecewise-linear surface in $N$ then (after isotoping $S$ slightly if necessary) the level sets of $f|_S$ will consist of simple closed curves and finitely many graphs in $S$.  We can isotope $S$ so as to make a regular neighborhood of each graph horizontal.  The complement of the horizontal subsurfaces will be foliated by simple closed curves, and thus consist of annuli, which can be isotoped to vertical annuli.  The result of this isotopy is a flat surface, so we have:

\begin{Lem}
\label{makeflatlem}
Every pl surface properly embedded in $\Sigma \times [0,1]$ is isotopic to a flat surface.
\end{Lem}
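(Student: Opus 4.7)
The plan is to follow the author's sketched argument in three phases: put $\pi|_S$ into PL general position, flatten neighborhoods of the critical fibers into horizontal subsurfaces, and then straighten the complementary annular pieces into vertical annuli.

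First I would triangulate $S$ so that $\pi|_S$ is simplicial, with a small PL perturbation arranged so that the vertex heights take distinct values $c_1 < c_2 < \cdots < c_n$ in $(0,1)$ and $\pi|_S$ is linear on each simplex. Then a level set at any noncritical height is a disjoint union of simple closed curves made up of edge pieces, while the level set at each $c_i$ is a graph $G_i \subset S$ consisting of the vertex of height $c_i$ together with the edges emanating from it along which $\pi|_S$ is nonincreasing on one side and nondecreasing on the other.

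Second, for each $i$ pick $\epsilon > 0$ small enough that $[c_i - \epsilon, c_i + \epsilon]$ contains no other critical value and the slabs $\Sigma \times [c_i - \epsilon, c_i + \epsilon]$ are pairwise disjoint. The preimage $F_i := (\pi|_S)^{-1}([c_i - \epsilon, c_i + \epsilon])$ is a regular neighborhood in $S$ of the graph $G_i \subset \Sigma \times \{c_i\}$. Since $F_i$ sits inside the product slab $\Sigma \times [c_i - \epsilon, c_i + \epsilon]$ and retracts onto $G_i$, I can use the product structure to construct an ambient isotopy of $N$ supported in a small regular neighborhood of $F_i$ that carries $F_i$ onto an embedded copy inside the single level $\Sigma \times \{c_i\}$. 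After performing these isotopies for each $i$, the subsurface $\bigcup_i F_i$ becomes a union of horizontal subsurfaces.

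Third, the complement $S \setminus \bigcup_i F_i$ consists of pieces properly embedded in the open slabs $\Sigma \times (c_i + \epsilon, c_{i+1} - \epsilon)$, on each of which $\pi$ is a submersion with compact level sets that are unions of circles. Each such piece is therefore a disjoint union of annuli, each fibered over its height interval by level circles. An annulus of this form in a product slab, foliated by level circles, is ambient isotopic in the slab (rel neither boundary, but moving the boundary circles only within their respective horizontal levels) to a vertical annulus of the form $\ell \times [c_i + \epsilon, c_{i+1} - \epsilon]$; the boundary motion is absorbed as an isotopy of the boundary of $F_i$ and $F_{i+1}$ inside the level surfaces, and so leaves them horizontal. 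Concatenating these slab-by-slab isotopies with the flattening step produces the desired isotopy to a flat surface.

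The main obstacle will be ensuring the resulting vertical annuli are pairwise disjoint, as required by the definition. Within a single slab $\Sigma \times [c_i + \epsilon, c_{i+1} - \epsilon]$ the component annuli are already disjoint and each level slice $\Sigma \times \{t\}$ intersects them in disjoint simple closed curves; the straightening must be carried out simultaneously on all components by a single ambient isotopy of the slab so that this disjointness is preserved throughout — this can be done by interpolating the level-slice configurations between the two endpoint heights along a continuous path in the space of disjoint curve systems in $\Sigma$, which exists because any two such configurations isotopic to the boundary data can be connected without collisions. With that simultaneous straightening in hand, the construction outputs an isotopic flat surface.
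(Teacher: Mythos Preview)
Your proposal is correct and follows essentially the same three-step outline as the paper's argument (put $\pi|_S$ in general position, flatten neighborhoods of the singular level graphs, then straighten the remaining level-foliated annuli to vertical), only with considerably more care about the details; in particular, your discussion of the pairwise disjointness of the vertical annuli is a point the paper's sketch does not address explicitly. One small clarification: in your last paragraph, the path in the space of disjoint curve systems you need is already furnished by the level slices of the annuli themselves, and the simultaneous straightening is then just the ambient isotopy of the slab obtained from the isotopy extension theorem applied to that one-parameter family.
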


We will say that two horizontal subsurfaces of $S$ are \textit{adjacent} if there is a vertical annulus with one boundary loop in each of the horizontal subsurfaces.  We will say that a vertical annulus $A$ adjacent to a horizontal subsurface $F$ \textit{goes up} if $A$ is locally above $F$ (with respect to $\pi$).  Otherwise, we will say that $A$ \textit{goes down} from $F$.

Each horizontal subsurface has two potential transverse orientations, which project to vectors in $[0,1]$ pointing opposite ways. A transverse orientation on one horizontal subsurface defines a transverse orientation on each adjacent horizontal subsurface. (If $S$ is non-orientable then this cannot be extended consistently to the whole surface, but is still well defined locally.) If the transverse orientations project to vectors pointing the same way in $[0,1]$ then we will say the adjacent horizontal subsurfaces \textit{face the same way}. Otherwise, we will say the subsurfaces \textit{face opposite ways}.

\begin{Def}
A flat surface $S$ is \textit{tight} if no horizontal annulus has one adjacent vertical annulus going up and the other going down and if $F$ is a horizontal disk or a horizontal annulus with both annuli going the same direction then the closest adjacent subsurface faces the opposite way from $F$.
\end{Def}

If $F$ is a horizontal annulus with one adjacent vertical annulus going up and the other going down then we can shrink $F$ to a loop and isotope the adjacent annuli so that the two vertical annuli form a single vertical annulus.

If $F$ is a horizontal disk or an annulus with both vertical annuli going the same way such that the closest adjacent horizontal subsurface $F'$ faces the same way as $F$, then the projections of $F$ and $F'$ will be disjoint. We can push $F$ into the same level as $F'$, shrinking the annulus (or annuli) between them down to a loop, and pushing any other portions of $S$ out of the way. The resulting surface is still flat, but has one fewer horizontal subsurfaces. Thus we have the following:

\begin{Lem}[Lemma~18 in~\cite{me:upperbnd}]
\label{maketightlem}
Every flat surface $S$ is isotopic to a tight surface $S'$.
\end{Lem}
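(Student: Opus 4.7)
The plan is to turn the two reduction moves described in the two paragraphs immediately preceding the lemma into an induction on the complexity $n(S)$, defined as the number of horizontal subsurfaces of $S$ (ties can be broken by adding, say, the number of vertical annuli as a secondary term). If $S$ is already tight we are done, so suppose not. Then one of the following situations applies and I would handle it as a single reduction step.

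In the first case, some horizontal annulus $F$ has one adjacent vertical annulus going up and the other going down. Collapsing $F$ along its core circle via a vertical isotopy supported in a neighborhood of $F$ replaces $F$ together with its two adjacent vertical annuli by a single vertical annulus; the resulting surface $S_1$ is still flat, is ambient isotopic to $S$, and satisfies $n(S_1) = n(S)-1$. In the second case, some horizontal disk or horizontal annulus $F$ (with both vertical annuli going the same way in the annulus subcase) has its closest adjacent horizontal subsurface $F'$ facing the same way as $F$. Because $F'$ is the closest adjacent subsurface and faces the same way, the projections $\pi(F)$ and $\pi(F')$ are disjoint subintervals of $[0,1]$ and the vertical product region between them meets $S$ only in the connecting vertical annuli (plus, possibly, pieces of $S$ not adjacent to $F$ or $F'$). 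Pushing $F$ vertically up (or down) to the level of $F'$ collapses each connecting vertical annulus to a loop on $F'$; any other components of $S$ met along the way can be shoved aside by a horizontal isotopy since $F$ has boundary a single loop (or two loops in the annulus case) and the transverse orientations already match. The result $S_1$ is flat with $n(S_1)<n(S)$.

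Since each reduction strictly decreases $n(S)$ and the initial value is finite, iterating yields a flat surface $S'$ isotopic to $S$ on which neither reduction applies, which is precisely the tightness condition in the definition. A short check that the two moves exhaust the possible failures of tightness (inspecting the definition case by case for a horizontal annulus with mixed-direction vertical annuli, a horizontal disk, and a horizontal annulus with coherent vertical annuli) completes the argument.

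The main obstacle I expect is the ambient isotopy in the second case: verifying that, while pushing $F$ to the level of $F'$, the pieces of $S$ that happen to live in the vertical region between $\pi(F)$ and $\pi(F')$ but are not adjacent to $F$ or $F'$ can be isotoped horizontally out of the way without creating new violations of flatness. The hypothesis that $F'$ is the \emph{closest} adjacent horizontal subsurface, combined with the low topological complexity of $F$ (a disk or an annulus), is what allows such obstructions to be slid sideways. Once this ambient isotopy is set up carefully, the strict drop in $n(S)$ makes termination automatic, and the lemma (which in any case is Lemma~18 of~\cite{me:upperbnd}) follows.
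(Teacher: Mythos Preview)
Your proposal is correct and is essentially the argument the paper gives: the two paragraphs preceding the lemma describe exactly your two reduction moves, and the paper's proof is the induction on the number of horizontal subsurfaces that you formalize. One small slip: $\pi(F)$ and $\pi(F')$ are single points in $[0,1]$, not subintervals; the disjointness statement in the paper refers to the projections of $F$ and $F'$ into the $\Sigma$-factor, which is what guarantees you can slide $F$ up to the level of $F'$ and merge them.
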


We will often want to rule out horizontal disks and annuli in tight surfaces.  This is not possible in general, but the following Lemma will allow us to rule them out in many cases. For the proof, see Lemma~19 in~\cite{me:upperbnd}.

\begin{Lem}
\label{tightdisknotesslem}
If $S$ is a tight surface in which some horizontal subsurface is either a disk or an annulus then then there is a horizontal loop that is trivial in $\Sigma$ and essential in $S$.
\end{Lem}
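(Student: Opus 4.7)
The plan is to exhibit the desired loop by tracing a chain of adjacent horizontal subsurfaces of $S$ starting from the given horizontal disk or annulus $F$, with the tightness hypothesis dictating the chain's structure. I will assume (as is routinely arranged in the applications of this lemma) that $S$ has no sphere components, since these can be dealt with separately.

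First, suppose $F$ is a horizontal disk, so $\ell = \partial F$ is trivial in $\Sigma$. Let $A$ be the attached vertical annulus and $F'$ the horizontal subsurface at the other end of $A$. The tightness condition forces $F'$ to face opposite to $F$. If $F'$ were also a disk with boundary $\ell$, then $F \cup A \cup F'$ would form a sphere component of $S$, which has been excluded. So $F'$ is not a disk, and since $F' \subset \Sigma$ contains $\ell$ on its boundary, $F'$ has at least one boundary loop $\ell^*$ that is trivial in $\Sigma$ (either parallel to $\ell$ when $F'$ is an annulus, or cobounding with $\ell$ a disk in $\Sigma$ more generally). This loop $\ell^*$ is the candidate horizontal loop; by iterating the argument along the chain of horizontal subsurfaces whenever the next horizontal piece is still a disk or annulus, I can arrange that $\ell^*$ lies on a horizontal subsurface that is neither a disk nor an annulus.

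The remaining step is to verify that $\ell^*$ is essential in $S$. I would argue by contradiction: a hypothetical disk $D^* \subset S$ with $\partial D^* = \ell^*$ would glue together with the chain of pieces near $F$ to form a closed surface embedded in $S$. The tightness conditions---in particular the requirement that adjacent horizontal disks and annuli face opposite ways, and that vertical annuli adjacent to a horizontal annulus go the same direction---prevent this closed surface from being compatible with the structure of $S$, typically by forcing it to be a sphere and hence producing a sphere component (already excluded). The annulus case, when $F$ is a horizontal annulus, reduces to the disk case if one of $F$'s boundaries is trivial in $\Sigma$, and otherwise is handled by examining both adjacent horizontal subsurfaces (via the two vertical annuli attached to $F$) and applying the same chain analysis.

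The main obstacle is the verification that $\ell^*$ is essential in $S$. The naive candidate loops (the boundary of $F$ itself, cross-sections of $A$, or nearby parallel loops) are all easily seen to bound disks in $S$ built from the chain emanating from $F$, so one must choose $\ell^*$ at the ``far'' end of the chain, where no such bounding disk is readily available, and then use the tightness conditions to rigorously preclude any bounding disk whatsoever. This case analysis is the heart of the argument and is exactly where the tight hypothesis earns its name.
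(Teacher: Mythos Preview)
The paper does not prove this lemma in the text; it simply cites Lemma~19 of \cite{me:upperbnd}. So there is no in-paper argument to compare against, and your outline has to stand on its own.

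It does not. In the disk case you correctly identify the chain $F \to F'$ and correctly observe that tightness forces $F'$ to project into the disk $D_F \subset \Sigma$ bounded by $\partial F$, so every boundary loop of $F'$ is trivial in $\Sigma$. But the step you yourself call ``the main obstacle'' is not actually carried out. Your contradiction sketch --- that a hypothetical $D^* \subset S$ with $\partial D^* = \ell^*$ ``would glue together with the chain of pieces near $F$ to form a closed surface'' --- only works when $F'$ has exactly two boundary components; if $F'$ has three or more, gluing $D^*$ to $F \cup A \cup F'$ leaves the remaining boundary loops of $F'$ open and no closed subsurface is produced. One needs instead a finiteness or innermost argument: for example, observe that any flat disk $D^* \subset S$ must (by Euler characteristic) contain some horizontal disk $F'' \neq F$, and organize this into a strictly decreasing induction on the number of horizontal pieces in such disks. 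None of that is in your write-up.

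The annulus case with both boundary loops essential in $\Sigma$ is a more serious gap. Saying ``apply the same chain analysis'' does not explain where a loop trivial in $\Sigma$ is supposed to come from, since every loop in sight is parallel to the essential core of $F$. What tightness actually gives you here is that the closest adjacent subsurface $F'$ projects into the annular region of $\Sigma$ between the two components of $\partial F$; you must then argue that either $F'$ acquires a boundary loop trivial in $\Sigma$ (reducing to the disk case), or $F'$ is again an annulus and the chain of nested annular projections continues, forcing --- by finiteness of the horizontal pieces --- termination in either a trivial loop or a closed torus component of $S$. Your proposal does not contain this argument.
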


\section{Sweep-outs}
\label{sweepsect}

Note that one boundary component of $N = \Sigma \times [0,1]$ is adjacent to the regular neighborhood $U$ of $O$ and the other component is adjacent to the regular neighborhood $H$ of the spine $K$. 

\begin{Lem}
\label{essdistlem}
Let $T \subset M$ be a surface such that $S = T \cap N$ is an essential flat surface. If $T \cap H$ is a non-empty collection of essential disks and $T \cap U$ is a non-empty collection of essential annuli and M\"obius bands then $d(O) \leq 1-\chi(T)$.
\end{Lem}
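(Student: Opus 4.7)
The plan is to use the combinatorial structure of the flat surface $S$ to build an explicit path in the curve complex $\mathcal{C}(\Sigma)$ from a loop of $\mathcal{O}$ to a loop of $\mathcal{H}$, then bound its length by $1-\chi(T)$ via an Euler characteristic count. By hypothesis, the boundary of each component of $T\cap U$ consists of loops on $\Sigma=\partial U$ lying in $\mathcal{O}$, and the boundary of each component of $T\cap H$, once transported across the product $N\cong\Sigma\times[0,1]$, consists of loops in $\mathcal{H}$.

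First I decompose $S$ into horizontal subsurfaces $F_1,\dots,F_m$ and vertical annuli (each vertical annulus having a well-defined core curve in $\Sigma$). I form the dual graph $G$ whose vertices are the disks of $T\cap H$, the annuli and M\"obius bands of $T\cap U$, and the horizontal subsurfaces of $S$, and whose edges record shared boundary loops — so each vertical annulus contributes one edge between the two horizontal subsurfaces it abuts. Connectedness of $T$ makes $G$ connected, and I choose a shortest path in $G$ from some disk $D_0\in T\cap H$ to some component $A_0$ of $T\cap U$, passing through horizontal subsurfaces $F_{i_1},\dots,F_{i_k}$.

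Reading off the edges of this path yields a sequence $\ell_0=\partial D_0$, $\ell_1,\dots,\ell_{k-1}$, $\ell_k\subset\partial A_0$ where each intermediate $\ell_j$ is the core of the vertical annulus joining $F_{i_j}$ and $F_{i_{j+1}}$. For each $j$, both $\ell_{j-1}$ and $\ell_j$ are boundary components of the common horizontal subsurface $F_{i_j}$, so they are disjoint in $\Sigma$ and hence at distance at most $1$ in $\mathcal{C}(\Sigma)$. Since $\ell_0\in\mathcal{H}$ and $\ell_k\in\mathcal{O}$, this gives $d(O)\le k$.

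It remains to bound $k$ by $1-\chi(T)$. Disks contribute $+1$ each while annuli, M\"obius bands, and vertical annuli contribute $0$ to $\chi$, and gluings along circles contribute nothing, so
\[
\chi(T) \;=\; |T\cap H| + \sum_{i=1}^{m}\chi(F_i).
\]
Essentiality of $S$ forces each $\chi(F_i)\le-1$. Each of the $k$ horizontal subsurfaces on the chosen path contributes at least $1$ to $-\chi(S)$; minimality of the path forces each of the remaining $|T\cap H|-1$ disks to contribute an extra unit to $-\chi(S)$ — either by being attached to an off-path horizontal subsurface, or by raising the boundary count of a path subsurface (thereby making its $\chi$ strictly smaller than $-1$). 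This yields $k + (|T\cap H|-1) \le -\chi(S) = |T\cap H| - \chi(T)$, i.e.\ $k\le 1-\chi(T)$, as required. The main obstacle is precisely this last bookkeeping: the naive count only gives $k\le -\chi(S)$, and one must use the shortest-path condition together with the topology of the horizontal subsurfaces to extract the additional $|T\cap H|-1$ units of $-\chi(S)$ that sharpen the inequality to the claimed bound.
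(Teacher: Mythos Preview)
Your overall strategy---read off a curve-complex path from the horizontal pieces of $S$---is the same as the paper's, but the endpoint you choose on the $\mathcal{H}$ side is wrong, and this makes the final bookkeeping step false.

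Concretely: take $T$ a torus ($\chi(T)=0$), with $T\cap H$ two essential disks $D_1,D_2$, $T\cap U$ one essential annulus $A$, and $S$ consisting of two horizontal pairs of pants $P_1,P_2$ (so $\chi(S)=-2$, consistent with $\chi(T)=|T\cap H|+\chi(S)$). Arrange the dual graph so that $P_2$ meets $D_1$, $D_2$, and $P_1$, while $P_1$ meets $P_2$ and both boundary circles of $A$. Then the shortest dual-graph path from any disk to $A$ is $D_i,P_2,P_1,A$, so $k=2$. But $1-\chi(T)=1$, so your inequality $k\le 1-\chi(T)$ fails. The claim that the ``extra'' disk $D_2$ forces $-\chi(P_2)\ge 2$ is exactly what breaks: $P_2$ is a pair of pants with three boundary circles (to $D_1$, $D_2$, $P_1$) and $-\chi(P_2)=1$.

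The paper avoids this by not insisting that the $\mathcal{H}$-endpoint be the boundary of a disk of $T\cap H$. Instead it takes the smallest level $t'_j$ at which some horizontal loop bounds a disk \emph{in $T$}; since that disk misses $O$ (else $d(O)=0$), it can be pushed into the handlebody side, so the loop lies in $\mathcal{H}$. In the example above, the loop $\ell_1=P_1\cap P_2$ already bounds the disk $P_2\cup D_1\cup D_2\subset T$, so one reaches $\mathcal{H}$ after a single horizontal step from $\mathcal{O}$. The Euler-characteristic count then goes through because below level $t'_{j-1}$ every horizontal loop is essential in $T$, so that portion of $S$ is an essential subsurface of $T$ and hence has $\chi\ge\chi(T)$; the $|T\cap H|$ term never enters.
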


\begin{proof}
Paramaterize $N$ so that $\Sigma_0 = \Sigma \times \{0\}$ is adjacent to the regular neghborhood of $O$ and $\Sigma_1 = \Sigma \times \{1\}$ is the other boundary component. By assumption, the collection of loops $S \cap \Sigma_0$ are boundary loops of annuli and M\"obius bands in a regular neighborhood of $O$. Thus the loops of $S \cap \Sigma_0$ are contained in $\mathcal{O}$. Similarly, the loops of $S \cap \Sigma_1$ are contained in $\mathcal{H}$.

Let $t_1 \leq \dots \leq t_k$ be the levels such that each $\Sigma_{t_i}$ contains a horizontal subsurface of $S$. Let $0 = t'_1 \leq t'_2 \leq \dots \leq t'_{k+1}=1$ be the levels between consecutive horizontal subsurfaces. The boundary of each horizontal subsurface $S \cap \Sigma_{t_i}$ contains the loops in both $S \cap \Sigma_{t'_{i-1}}$ and $S \cap \Sigma_{t'_i}$. Thus loops in consecutive horizontal subsurfaces have disjoint projections into $\Sigma$ and if we choose a loop $\ell_i$ in each $S \cap \Sigma_{t'_i}$, we will find a path in the curve complex for $\Sigma$.

Let $j$ be the smallest index such that some horizontal loop in $S \cap \Sigma_{t'_j} \subset T$ bounds a disk $D \subset T$. Such an index exists because $S \cap \Sigma_1$ is non-empty and contains the boundary of each disk in $T \cap H$. If $D$ intersects $O$ then an innermost disk bounded by $D \cap O$ defines a geometric compression of $O$. As noted above, this implies $d(O) = 0$. We will thus assume this is not the case, so $S \cap )$ is empty and $D$ can be pushed entirely into the handlebody bounded by $\Sigma_{t'_j}$. Thus $\partial D \subset S \cap \Sigma_{t'_j}$ defines a vertex in $\mathcal{H}$. 

The path in $\mathcal{C}(\Sigma)$ defined by the portion of $S$ below level $t'_j$ defines a path from $\mathcal{O}$ to $\mathcal{H}$ of length at most $j$. The loops in $S \cap \Sigma_{t'_{j-1}}$ are essential in $T$ since $t'_j$ was chosen to be minimal. Thus the subsurface $S \cap \Sigma \times [0,t'_{j-1}]$ is an essential subsurface of $T$ and has Euler characteristic greater than or equal to $\chi(T)$. Each horizontal subsurface of $S$ has Euler characteristic $-1$ or less (since there are no horizontal disks or annuli and no loops that are trivial in $S$) so $j$ is at most the negative Euler characteristic of $T$. Because we must allow one more edge from a loop in $S \cap \Sigma_{t'_{j-1}}$ to a loop in $S \cap \Sigma_{t'_j}$, we find that $d(O)$ is at most $1-\chi(T)$.
\end{proof}

\begin{proof}[Proof of Theorem~\ref{mainthm1}]
Let $O$ be a one-sided Heegaard surface and $K$ a spine for the complement $M \setminus O$ as above. Let $T \subset M$ be either an incompressible two-sided surface or a one-sided Heegaard surface. In the second case, replace $T$ with a surface results from maximally compressing the original surface. In either case, we can assume that $T$ is geometrically incompressible and transverse to $O$ and $K$.  

Because $T$ is geometrically incompressible, any loop of $O \cap T$ that is trivial in $O$ must be trivial in $T$. The complement $M \setminus O$ is irreducible so we can isotope $T$ so that every loop in $T \cap O$ is essential in $O$. We will further isotope $T$ so that $T \cap (K \cup O)$ is minimal over all such representatives of $T$. Moreover, if $T$ is incompressible, we will choose $T$ to be an incompressible surface in $M$ of the same genus as the original $T$ intersects $(K \cup O)$ minimally over all such incompressible surfaces.

Let $N$ be the complement of a regular neighborhood of $U \cup H$ of $O \cup K$ such that $T$ intersects $H$ in essential disks and intersects $U$ in essential annuli and M\"obius bands. By Lemmas~\ref{makeflatlem} and~\ref{maketightlem}, we can isotope $S = T \cap N$ to a tight flat surface. By Lemma~\ref{tightdisknotesslem}, any horizontal loop in $S$ that is trivial in some $\Sigma \times \{t\}$ must bound a compressing disk $D$ for $S$. Because $T$ is geometrically incompressible (in $M$), such a loop must bound a disk $E \subset T$ such that $D \cap (K \cup O)$ is non-empty. If $T$ is incompressible then replacing the disk $E$ with the compressing disk $D$ creates a new incompressible surface of the same genus. If $T$ is a one-sided Heegaard surface then the complement of $T$ is irreducible so replacing $E$ with $S$ creates a new one-sided surface that is isotopic to $T$ (within $M$). In either case, the new surface intersects $K \cup O$ in strictly fewer components. By assumption, the intersection has already been minimized, so $S$ must be an essential flat surface.

If $T$ is disjoint from $U$ then $T$ is contained in the handlebody $M \setminus U$. However, the only geometrically incompressible surface in a handlebody is a sphere. Similarly, if $T$ is disjoint from $H$ then $T$ is contained in an interval bundle, in which case the only geometrically incompressible surface is the midpoint surface or its double. The double of the midpoint surface is compressible (it bounds a handlebody) and the midpoint surface is $O$. Thus if $T$ is not isotopic to $O$, it must intersect both $\Sigma_0$ and $\Sigma_1$ and the bound follows from Lemma~\ref{essdistlem}. If $T$ is isotopic to $O$ then we may have replaced $T$ at the beginning of the proof with the result of compressing the original one-sided Heegaard surface $T$ some number of times. By Lemma~\ref{mainlem}, this implies that the original $T$ is a stabilization of $O$.
\end{proof}

\section{The complex of surfaces}
\label{surfacecplxsect}

The complex of surfaces $\mS(M)$ was introduced in~\cite{me:calc} as a tool for describing thin position arguments. Two surfaces are \textit{sphere-blind isotopic} if they are related by a combination of isotopy, adding or removing trivial spheres and attaching tubes from sphere components to other components of the surface. The vertices of $\mS(M)$ are sphere-blind isotopy classes of transversely oriented, strongly separating surfaces, with edges connecting each surface to the surfaces that result from compressing it. A collection of $n$ pairwise disjoint compressing disks for a surface define an $n$-dimensional cell whose vertices are the different surfaces that result from compressing along subsets of the collection of disks. See~\cite{me:calc} for a more careful definition.

The proof of Lemma~\ref{mainlem} uses an extension of the complex of surfaces $\mS(M)$, which we will define as follows: Let $\mB \subset \mS(M)$ be the set of vertices such that $v \in \mB$ if and only if the positive component of a surface $S$ representing $v$ is homeomorphic to a twisted interval bundle.  

We will say that $D'$ is a \textit{double disk} for $S$ if $D'$ is a disk made up of a compressing disk $D$ on the negative side of $S$ and an annulus on the positive side.  Such a disk will intersect $S$ in the both the boundary of $D'$ and a second loop in the interior of $D'$.  We will say that $D$ is the compressing disk \textit{associated to} $D'$.  If $S'$ is the result of compressing $S$ along $D$ then $D'$ will be a compressing disk for $S'$.  

The compression from $S$ to $S'$ corresponds to an edge below $v$ pointing towards $v$.  The disk $D'$ is on the positive side of $S'$, so it corresponds to an edge below the vertex $v'$ represented by $S'$ pointing away from $v'$.  Let $v''$ be the lower endpoint of this second edge.  If we add into $\mS$ an edge from $v$ to $v''$ then we can also add a triangle consisting of this new edge and the edges from $v$ to $v'$ and from $v'$ to $v''$  Let $\mS^O = \mS^O(M)$ be the result of adding such an edge and a triangle into $\mS$ for every double disk for a surface representing each vertex in $\mB$.  The new edge will point away from from $v$ and we will call it a \textit{double edge}.  (The original edges will be called \textit{single edges}.) A double edge has both endpoints in the set $\mB$ and represents a (geometric) compression of the one-sided midpoint surface.

Define a \textit{one-sided splitting path} in $\mS^O$ as an oriented path from $v_-$ to any vertex in $\mB$ along single edges.  (In fact, this is a path in $\mS$.)  The local maxima and minima of such a path define a sequence of closed, embedded surfaces $S_0,\dots,S_k \subset M$ that satisfy the following properties:

\begin{enumerate}
    \item The surface $S_0$ is empty.
    \item Between consecutive surfaces $S_i$, $S_{i+1}$ there is a compression body.
    \item The final surface $S_k$ bounds a twisted interval bundle on its positive side.
\end{enumerate}

If there are no internal maxima in the path, i.e.\ the path is monotonic increasing, then there is a single handlebody bounded by the final surface $S_k$.  This handlebody extends to a handlebody whose positive boundary double covers the one-sided midpoint surface defined by the twisted interval bundle so the midpoint surface is a one-sided Heegaard surface for $M$.  Conversely, every one-sided Heegaard surface is represented by a monotonic path from $v_-$ to a vertex in $\mB$.

Lemma~\ref{mainlem} is relatively easy in the case when the compressing disks for $O$ are not nested. The difficulty arises when the compressions are nested, but the proof below uses the complex of surfaces to rearrange a possibly nested sequence of compressions into a disjoint sequence.

\begin{proof}[Proof of Lemma~\ref{mainlem}]
Let $P_0$ be a monotonic, increasing one-sided splitting path for $O$, ending at a vertex $v$. Let $O = O_0, O_1,\dots,O_k = O'$ be a sequence of one-sided surfaces that result from repeatedly compressing $O$. Let $g_1$ be the double edge representing the geometric compressing disk for $O_o = O$ that produces $O_1$. By definition, $g_1$ shares a face with a pair of edges $e_0$, $e'_0$ where $e_0$ is below $v$ and $e'_0$ is below the second endpoint of $e_0$. The edge $e_0$ faces up, so by the barrier axiom for $\mS$, there is a path equivalent to $P_0$ that ends with the edge $e_0$. Replace $P_0$ with this path.
\begin{figure}[htb]
  \begin{center}
  \includegraphics[width=3.5in]{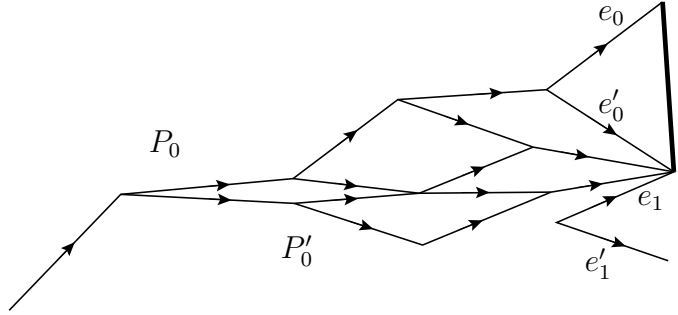}
  \put(-200,60){$P_0$}
  \put(-150,20){$P'_0$}
  \put(-30,110){$e_0$}
  \put(-30,75){$e'_0$}
  \put(-15,40){$e_1$}
  \put(-35,15){$e'_1$}
  \caption{The path $P'_0$ results from replacing the final edge $e_1$ in $P_0$ with $e'_1$, then weakly reducing as much as possible. The thickers line is a double edge.}
  \label{redpathfig}
  \end{center}
\end{figure}

If we replace the edge $e_0$ in $P_0$ with $e'_0$ then we again have an oriented path, this time with a single maximum. Let $P'_0$ be the result of weakly reducing $P_0$ as much as possible, as shown in Figure~\ref{redpathfig}. The path $P'_0$ ends at a vertex representing $O_1$. Let $e_1$, $e'_1$ be the (single) edges defined by the double edge from $O_1$ to $O_2$.  Because the path $P'_0$ is thin and $\mS$ satisfies the Casson-Gordon axiom, the last minimum of $P'_0$ (or the initial vertex if there are no internal minima) is incompressible. Thus the barrier axiom again implies that $P'_0$ is equivalent to a path $P_1$ whose final edge is $e_1$. Replace the edge $e_1$ in $P_1$ with $e'_1$ and let $P'_1$ be the result of weakly reducing this path as much as possible.

Repeat this process for each geometric compression in the sequence from $O_0$ to $O_k$. Define $P_k = P'_{k-1}$. By assumption, $O_k = O'$ is a one-sided Heegaard surface, so the path $P_k$ is a strongly irreducible path for a handlebody. The only strongly irreducible oriented path in a handlebody is a monotonic path, so $P_k$ has no internal maxima.

The sequence of weak reductions that turn the path $P_0$ into $P_k$ define a fan of 2-cells in $\mS(M)$, as in~\cite{me:calc}. Moreover, following~\cite{me:calc}, we can remove diamonds in the fan in which all the edges point up. This implies that in the resulting fan every vertex is the lower endpoint of at least one edge pointing towards that vertex. By following such edges up from the vertex $v_k$ representing $O_k$, we can find a reverse-oriented path from $v_k$ to some vertex $v'$ in the original path $P_0$, as in Figure~\ref{redfinalfig}. Appending the reverse of this path to the reverse of the subpath of $P_0$ from $v'$ to $v_0$ produces a path $P''$ of compression from $v_0$ (the double of $O$) through $v'$ to $v_k$ (the double of $O'$) in which the first half of the compressions are on one side and the second half are on the other side.
\begin{figure}[htb]
  \begin{center}
  \includegraphics[width=4.5in]{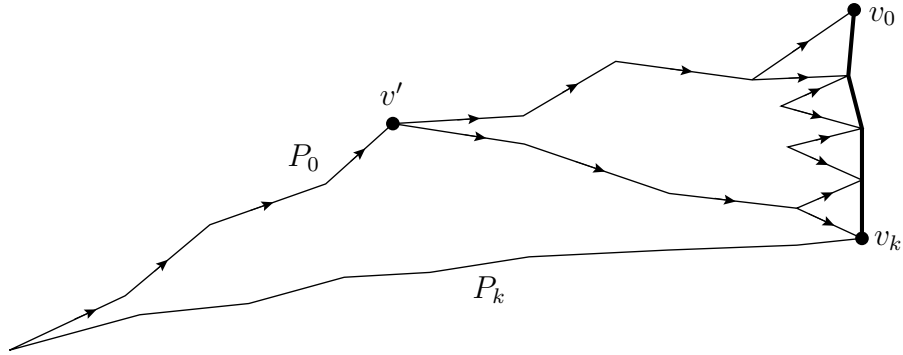}
  \put(-220,70){$P_0$}
  \put(-150,20){$P_k$}
  \put(-185,93){$v'$}
  \put(0,125){$v_0$}
  \put(2,40){$v_k$}
  \caption{The path $P'_0$ results from replacing the final edge $e_1$ in $P_0$ with $e'_1$, then weakly reducing as much as possible. The thickers line is a double edge.}
  \label{redfinalfig}
  \end{center}
\end{figure}

The vertex $v'$ determines a Heegaard surface $T$ for the negative side of $v_k$, which is a handlebody by assumption.  By the classification of Heegaard surfaces in handlebodies (which follows from Waldhausen's Theorem~\cite{wald} and Casson-Gordon's Theorem~\cite{cg}, as in~\cite{st:crossi}), $T$ is a multiple stabilization of a boundary parallel surface, so there is a collection $R$ of reducing spheres, each of which intersects $T$ in a single essential loop cutting off a single unknotted torus.

In the ambient manifold $M$, the surface $T$ bounds a submanifold $M'$ that results from attaching handles to a regular neighborhood of $O_k$. Moreover, we can get from $M'$ to a regular neighborhood of $O$ by drilling out tubes. Let $T'$ be the result of pushing $T$ into $M'$, then attaching the appropriate tubes.  This surface cuts $M'$ into a surface-cross-interval and an interval bundle with one-handles attached. The reducing spheres $R$ intersect $M'$ in essential disks, so by an adaptation of Haken's method similar to that in~\cite{cg:red}, we can isotope each essential disk to intersect the product side of $T'$ in an annulus and the non-product side in a disk. 

Thus we can isotope $O$ to intersect each reducing sphere in an essential loop, and intersect each ball bounded by a reducing sphere in a punctured torus. It follows that $O$ is a multiple-stabilization of $O'$.
\end{proof}

\section{Two-sided Heegaard surfaces}
\label{twosidesect}

Let $O$ be a one-sided Heegaard surface, and $U$ a closed regular neighborhood of $O$ as above. Let $\alpha \subset U$ be an interval in the I-bundle structure on $U$ and let $X \subset U$ be a regular neighborhood of $\alpha$. The complement $H^- = U \setminus X$ is an interval bundle over a surface with boundary and thus a handlebody. By assumption, the complement $M \setminus U$ is a handlebody, which intersects the ball $X$ in two disks. Thus the closure of $(M \setminus U) \cup X$ is a second handlebody $H^+$ with $\partial H^- = \partial H^+$. Their common boundary $\Sigma'$  is a Heegaard surface and as in the introduction, we will say that $\Sigma'$ is \textit{induced by} $O$.

Let $D^+ \subset X$ be a properly embedded, essential disk in $H^+$ dual to $\alpha$, i.e.\ intersecting the arc in a single point. Let $\beta \subset O$ be a properly embedded arc in the surface $O \setminus X$. The union of the intervals in $U$ that intersect $\beta$ is an essential properly embedded disk $D^- \subset H^-$. The intersection $D^- \cap D^+ \subset \Sigma'$ consists of two points, and any essential loop in $O$ disjoint from $\beta \cup X$ will define an essential loop in $\Sigma$ disjoint from $D^-$ and $D^+$.  Thus the distance in the curve complex $\mathcal{C}(\Sigma')$ from $D^-$ to $D^+$ is two.

Depending on the one-sided splitting, the $d(\Sigma')$ may be less than two. However, if $d(\Sigma') = 1$ (i.e. $\Sigma'$ is irreducible but weakly reducible) then there will be a two-sided incompressible surface of genus less than that of $\Sigma'$. If $d(\Sigma') = 0$ then there will be a Heegaard surface of genus strictly less than that of $\Sigma'$. Thus Theorems~\ref{mainthm1} and~\ref{2sidedthm1} imply that for $O$ with sufficiently high distance, the induced Heegaard surface $\Sigma'$ has distance exactly two. (As noted in the introduction, this also follows from Li's results~\cite{li}.)

Let $H$ be a regular neighborhood of the spine of the complement of $O$. As in~\cite{me:upperbnd}, we will let $\mS(M, H)$ be the complex of surfaces relative to $H$: Vertices of $\mS(M, H)$ are transverse isotopy classes of surfaces that intersect $H$ in essential disks. Edges correspond to compressions in the complement of $H$ and bridge compressions that change the intersection of the surface with $H$. Higher dimensional cells are defined by disjoint compressions, as in the complex of surface $\mS(M)$.

A second Heegaard surface $T$ for $M$ defines a path $E$ in $\mS(M, H)$.  Following~\cite{me:calc}, we can weakly reduce the Heegaard splitting, replacing $E$ with a path in which the local minima have index zero and the local maxima have index one. As noted in Section~\ref{flatsect}, every index-zero surface can be isotoped to an essential flat surface, which determines a bound on $d(O)$. Since a local minimum will have genus less than that of $\Sigma$, this implies Theorem~\ref{2sidedthm1} whenever $E$ has an interior local minimum.

If $E$ has only a single maximum then we would like to find an essential flat representative for this maximum. In fact, we need to allow a slightly more general possibility. 

We will say that a subset $S \cap F_t$ of a surface $S \subset N$ has a \textit{flipped square} if it consists of the union of a subsurface $F$ of $S$ and a disk $D$ in $S$ such that $F \cap D$ consists of four points.  Moreover, the intersection of the vertical annuli just above $F_t$ with those just below $F_t$ consists of these same four points as in Figure~\ref{flippedfig}.  In such a subsurface, the normal vector to the disk points in the direction opposite the rest of the subsurface.
\begin{figure}[htb]
  \begin{center}
  \includegraphics[width=3.5in]{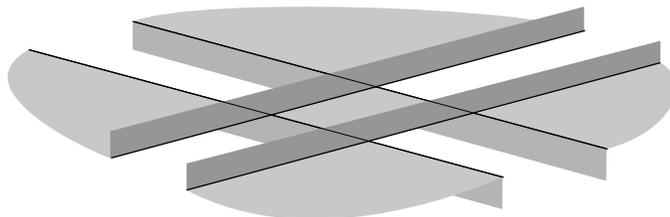}
  \caption{A flipped square is a horizontal disk that intersects the rest of the horizontal subsurface in four points and faces the opposite way.}
  \label{flippedfig}
  \end{center}
\end{figure}

In~\cite{me:upperbnd}, the author showed that every index-one surface can be isotoped to an essential flat surface, possibly with one flipped square in a sweep-out for $M$. In~\cite{me:open}, the author showed that index-two surfaces can also be made essential, with up to two flipped squares, relative to an open book decomposition. Because we have a slightly different setup here than in both those papers, we will prove a new version of these results.

\begin{Lem}
\label{indexesslem}
Every index-one surface in $M$ is isotopic to an essential flat surface, possibly with one flipped square. Ever index-two surface is isotopic to an essential flat surface with up to two flipped squares.
\end{Lem}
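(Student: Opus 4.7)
The plan is to adapt the arguments of~\cite{me:upperbnd} and~\cite{me:open} to the present setting, where the sweep-out region $N$ is the complement of $U \cup H$ rather than a neighborhood of a single spine or the exterior of an open book binding. Let $T$ be a surface of index $k \in \{1,2\}$ in a sweep-out of $M$ by surfaces parallel to $\Sigma$. After a small isotopy, $T$ is transverse to $O$ and to $K$, meets $H$ in essential disks, and meets $U$ in essential annuli and M\"obius bands; applying Lemmas~\ref{makeflatlem} and~\ref{maketightlem}, I may take $S = T \cap N$ to be a tight flat surface.

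If $S$ has no horizontal disks or annuli then it is already essential and we are done. Otherwise, Lemma~\ref{tightdisknotesslem} supplies a horizontal loop $\ell \subset S$ that is trivial in some $\Sigma \times \{t\}$ but essential in $S$. Then $\ell$ bounds a disk $D$ in $\Sigma \times \{t\}$, and together with a disk or punctured subsurface $E \subset S$ it forms either a compression of $T$ or an essential sphere. In the ``good'' case I can swap $E$ for $D$, push the result slightly off the level, and either decrease the number of horizontal pieces of $S$ or strictly simplify the vertical annulus combinatorics. Re-tightening and iterating eventually removes every inessential horizontal subsurface whose compression can be realized by an ambient isotopy that does not cross an adjacent critical level of the sweep-out.

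The obstruction to continuing arises exactly when the swap cannot be realized without pushing $T$ through a neighboring critical level. As in~\cite{me:upperbnd}, each such blocked compression forces the local picture at $\ell$ to be a horizontal disk meeting the adjacent horizontal subsurface in four points with its transverse orientation reversed relative to the neighbors; this is precisely a flipped square. Because the index of $T$ counts the number of independent critical directions in which $T$ cannot be pushed without increasing the Morse-theoretic complexity of the sweep-out, an index-one surface can accommodate at most one such blockage and an index-two surface at most two.

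The main obstacle is the bookkeeping that matches the index to the maximum number of flipped squares, and in particular ruling out more exotic ``stuck'' configurations. I would handle this exactly as in~\cite{me:upperbnd,me:open}: choose an innermost trivial-in-$\Sigma$, essential-in-$S$ horizontal loop, analyze the transverse orientations of the two adjacent horizontal subsurfaces, and show that any stuck configuration decomposes into a flipped square plus data that can be absorbed into a further simplification of $S$. Since the local model is flat surfaces in $\Sigma \times [0,1]$ and is independent of what is glued to $\Sigma_0$ and $\Sigma_1$, the arguments of the earlier papers transfer with only notational changes; the only genuinely new input is that $\Sigma_0$ is glued to the twisted interval bundle $U$ rather than to a handlebody, and this enters only through the essentiality condition on the horizontal pieces of $S$ meeting $\Sigma_0$, which is already built into our choice of $S$.
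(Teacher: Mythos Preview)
Your proposal has a genuine conceptual gap in how it links ``index'' to the number of flipped squares. In this paper (as in~\cite{me:upperbnd,me:open}), the index of $T$ refers to Bachman's topological index: index one means the disk complex of $T$ is disconnected, index two means it has nontrivial $\pi_1$. It is not a count of ``independent critical directions in which $T$ cannot be pushed'' in a sweep-out, and there is no a~priori reason your blocked-swap obstructions should be bounded by the index in that sense. Relatedly, your simplification step---swapping the subsurface $E \subset T$ bounded by a trivial horizontal loop for the horizontal disk $D$---is only an isotopy of $T$ when $E$ is itself a disk. Since $T$ here is compressible, $E$ may well have positive genus, and the swap would change the isotopy class of $T$, defeating the purpose of the lemma.

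The paper's argument is structurally different and uses the disk-complex definition of index directly. For index one, choose compressing disks $D^-, D^+$ in different components of the disk complex; Lemma~\ref{diskisotopylem} gives a sequence of band moves making $D^-$ horizontal, and then a further sequence $\{S_i\}$ of band moves heading toward a surface in which $D^+$ is horizontal. If no $S_i$ is essential, each carries a horizontal compressing disk $D_i$; since $D^-$ and $D^+$ lie in different components, some consecutive $D_i$, $D_{i+1}$ must as well, and stopping that band move midway yields the flipped square. For index two, a homotopically nontrivial loop $B_0,\dots,B_k$ in the disk complex gives a two-parameter grid $S^i_{j,k}$ of commuting band moves; if every unit square in the grid admitted a common horizontal disk, these would assemble into a disk bounding the loop $\{B_i\}$, contradicting nontriviality. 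Hence some square has no common horizontal disk, and stopping both band moves midway produces two flipped squares. The flipped squares thus arise not from ``stuck'' local simplifications but from halting specific band moves chosen via the global topology of the disk complex.
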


The proof uses the following construction:

Let $E_1, E_2 \subset S$ be horizontal subsurfaces of a tight surface $S \subset N$ in consecutive levels of $[0,1]$ such that $E_1$ is below $E_2$ and first assume the subsurfaces face opposite ways.  Let $I \subset [0,1]$ be the interval between the levels containing $E_1$ and $E_2$.  We can think of these as subsurfaces of $F$.  Let $\alpha$ be a properly embedded arc in $E_1$. Because we can project $\alpha$ between pages, we will abuse notation and talk about the intersection of $\alpha$ with subsurfaces in other pages. Assume the vertical annuli that containing the endpoints of $\alpha$ go up from $E_1$ and that $\alpha \cap E_2$ is empty or consists of a regular neighborhood in $\alpha$ of one or both of its endpoints. Then the vertical band $\alpha \times I$ between the levels containing $E_1$ and $E_2$ forms a disk whose boundary consists of an arc in $S$ and a horizontal arc $\alpha'$ which is the closure of $\alpha \setminus E_2$.

The endpoints of $\alpha'$ are contained in one or two vertical annuli or bands above $E_2$.  Assume $I' \subset [0,1]$ is the interval above $I$ such that the level defined by the upper endpoint of $I'$ contains the next highest horizontal subsurfaces $E_3$ of $S$ and that these face the same way as $E_1$. Note that this implies the interior of $\alpha'$ will be disjoint from $E_3$. Then $\alpha' \times I'$ is a disk whose boundary intersects $S$ in two vertical arcs.  The union $D = (\alpha \times I) \cup (\alpha' \times I')$ is a disk whose boundary consists of an arc in $S$ and the horizontal arc parallel to $\alpha'$.  Let $S'$ be the result of isotoping $S$ across the disk $D$ as in Figure~\ref{bandmovefig}, then making this flat surface tight.
\begin{figure}[htb]
  \begin{center}
  \includegraphics[width=5in]{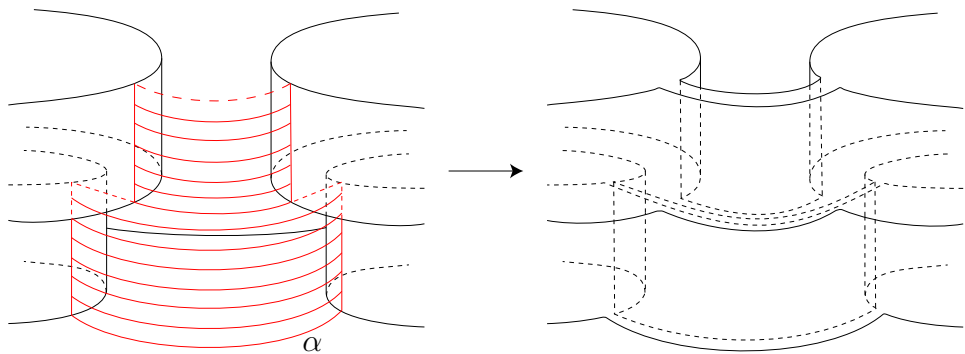}
  \put(-250,0){$\alpha$}
  \caption{The red disk defines a band move from the surface on the left to the surface on the right.}
  \label{bandmovefig}
  \end{center}
\end{figure}

\begin{Def}
We will say that $S'$ is the result of a \textit{band move} on $S$.
\end{Def}

If the horizontal subsurfaces $E_1$, $E_2$, $E_3$ do not face in the directions we have assumed that they do, we can still define a band move, and we refer the reader to~\cite{me:open} for a discussion of these other cases. In our situation, we also allow an additional type of band move that pulls the band across the one-sided surface $O$ and back into $N$.

The proof of Lemma~\ref{indexesslem} relies on the following:

\begin{Lem}
\label{diskisotopylem}
If $D$ is a compressing disk  for a tight surface $S$ then there is a sequence of band moves of $S$ after which we can isotope $D$ to be horizontal.
\end{Lem}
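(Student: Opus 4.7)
The plan is an induction on the number of interior Morse critical points of the height function $\pi|_D$, after a small perturbation to make $\pi|_D$ Morse. If $\pi|_D$ has no interior critical points then its level sets foliate $D$ by arcs together with at most one innermost loop, and a standard product argument in $\Sigma \times [0,1]$ isotopes $D$ into a single level $\Sigma \times \{t\}$, completing the base case.

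For the inductive step, pick an outermost interior local maximum $p$ of $\pi|_D$ at height $t_{\max}$ (the local minimum case is symmetric). Choose a regular value $s$ strictly between $t_{\max}$ and the next critical value of $\pi|_D$ below $p$, and let $\gamma$ be the level set of $\pi|_D$ at height $s$ that encloses $p$; this $\gamma$ bounds a subdisk $D_0 \subset D$ with $p$ in its interior, and $D_0 \subset \Sigma \times [s, t_{\max}]$. Since $D_0$ is a disk, $\gamma$ is null-homotopic in $N$, so it bounds a disk $\Delta \subset \Sigma \times \{s\}$, and by irreducibility of $\Sigma \times [0,1]$ the sphere $D_0 \cup \Delta$ bounds a ball $B$ sitting inside the slab.

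To push $D_0$ across $B$ onto $\Delta$ and eliminate $p$, we need $B$ disjoint from $S$. The intersection $B \cap S$ inherits a flat structure of horizontal subsurfaces and portions of vertical annuli trapped in $B$. Because $D_0$ caps $B$ from above and is disjoint from $S$, every trapped vertical annulus has both ends on trapped horizontal subsurfaces; an innermost such horizontal subsurface is therefore adjacent only to its neighbors inside $B$, so it sits in a configuration matching the hypotheses of the band move construction of the previous section. The corresponding band move pushes that subsurface out through the bottom face $\Delta$ of $B$, strictly reducing the combinatorial complexity of $B \cap S$. After finitely many band moves we may assume $B \cap S = \emptyset$, and the resulting isotopy of $D$ through $B$ replaces $D_0$ by $\Delta$ and strictly decreases the number of interior critical points of $\pi|_D$; induction finishes the proof.

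The main obstacle will be verifying at each step that an innermost trapped horizontal subsurface really is in a configuration satisfying the hypotheses of the band move definition: namely, that the adjacent horizontal subsurfaces above and below face the correct directions and that a properly embedded arc $\alpha$ playing the role in the band move construction can be chosen inside $B$. This case analysis parallels the one in \cite{me:upperbnd} and \cite{me:open}, and relies essentially on the tight hypothesis on $S$ (which rules out the configurations of disks and annuli that would block a band move) together with the outermost choice of $p$ (which guarantees that no interior critical points of $\pi|_D$ lie above $\gamma$ to complicate the passage of $D_0$ across $B$).
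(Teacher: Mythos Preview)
The gap is in your base case. If $\pi|_D$ has no interior critical points, its level sets are arcs (there can be no closed level curve without a center), but this does not allow $D$ to be isotoped into a single level. The endpoints of those level arcs lie on $\partial D \subset S$, and $\partial D$ will in general run through several horizontal subsurfaces of $S$ at different heights, joined by arcs in the vertical annuli. No ``standard product argument'' in $N$ can flatten such a disk while keeping $\partial D$ on $S$: making $\partial D$ lie in a single level is exactly the step that requires band moves, and your induction performs none in this case. Concretely, take $\partial D$ to consist of an arc in a horizontal subsurface $E_1$, two vertical arcs crossing an adjacent annulus, and an arc in the next horizontal subsurface $E_2$; after a small perturbation $\pi|_D$ has a single boundary maximum and minimum and no interior critical points, yet $D$ cannot be made horizontal until a band move has slid one of those boundary arcs past the other subsurface.

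The complexity that actually drops under band moves is therefore not the number of interior critical points of $\pi|_D$ but the complexity of $\partial D$ relative to the flat structure of $S$ --- for instance, the number of arcs in which $\partial D$ meets the vertical annuli of $S$. This is the induction carried out in \cite[Lemma~23]{me:upperbnd}, to which the paper simply defers without giving its own argument. Your inductive step inherits a related problem: a band move, as defined here, pushes $S$ across a very specific disk built from a horizontal arc $\alpha$ in a horizontal subsurface of $S$ and its continuation through adjacent levels, and there is no reason an arbitrary innermost piece of $B \cap S$ is in position to be expelled from $B$ by such a move.
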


The proof of this follows from an argument almost identical to that in~\cite[Lemma~23]{me:upperbnd}, so we will leave the details to the reader. The proof of Lemma~\ref{indexesslem} combines the proof of Lemmas~15 and~16 in~\cite{me:open}, but is sufficiently different to justify its inclusion here.

\begin{proof}[Proof of Lemma~\ref{indexesslem}]
First consider the index-one case and let $D^-$, $D^+$ be compressing disks for $S$ in different components of the disk complex. By Lemma~\ref{diskisotopylem}, there is a sequence of band moves after which $D^-$ is horizontal in a tight flat surface $S_0$ isotopic to $S$. The disk $D^+$ determines a similar sequence of band moves, defining a sequence $\{S_i\}$ of tight flat surfaces isotopic to $S$, starting with $S_0$. If any one of these surfaces is essential then the proof is complete.

Otherwise, every $S_i$ has a trivial horizontal loop and thus a horizontal compressing disk $D_i$ for $S_i$. If $S_i$ has a horizontal compressing disk disjoint from the band move that produces $S_{i+1}$ then this disk is also horizontal in $S_{i+1}$. Because $D^-$ and $D^+$ are in different components of the disk complex for $S$, there must be some value $i$ such that $D_i$ and $D_{i+1}$ are in different components. Because any two horizontal disks are disjoint, $D_i$ cannot be horizontal in $S_{i+1}$ and $D_{i+1}$ cannot be horizontal in $S_i$. Thus $D_i$ ceases to be horizontal after the band move from $S_i$ to $S_{i+1}$ and $D_{i+1}$ is not horizontal before the move. 

Assume for contradiction the band move pulls the band across the one-sided surface $O$ and let $S'$ be the surface that results from stopping the band move while the band is in $O$. The surface $S'$ intersects $O$ in a pair of pants with adjacent vertical annuli that are locally on opposite sides of $O$. By assumption, these vertical annuli are essential in $\Sigma_t$, so their boundary loops in $O$ are essential in $O$. Thus if we continue the band move, it will not create any trivial horizontal loops. All the loops disjoint from the band move are essential, so this contradicts the assumption that $S_{i+1}$ has a horizontal compression disk.

Thus the band move must be between two horizontal subsurfaces of $S$. Each trivial horizontal loop intersects the band, so if we stop the band move half way through the move, we will find a surface with one flipped square and no trivial horizontal loops, as in Figure~\ref{flippedbandfig}. 
\begin{figure}[htb]
  \begin{center}
  \includegraphics[width=5in]{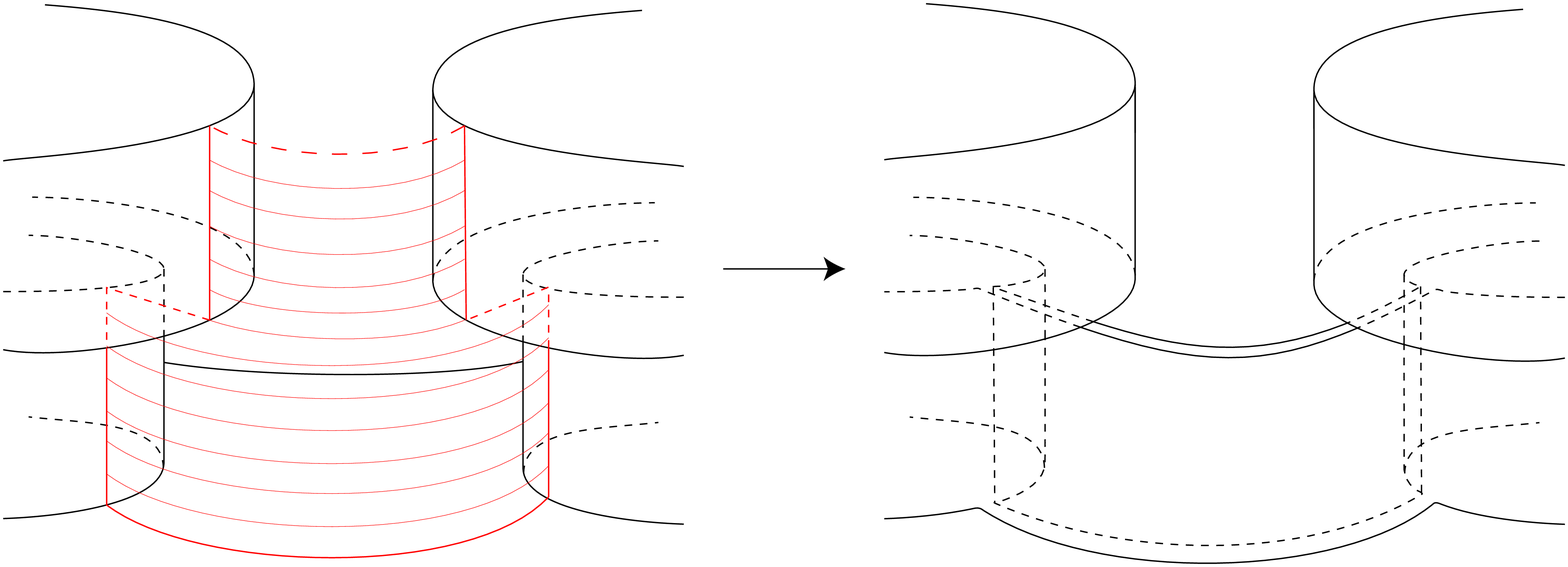}
  \put(-250,0){$\alpha$}
  \caption{Stopping a band move part way through produces a surface with a flipped square.}
  \label{flippedbandfig}
  \end{center}
\end{figure}

Next consider the case when $S$ is an index-two surface. Then the disk complex for $S$ has non-trivial fundamental group, so there is a homotopy non-trivial edge loop consisting of compressing disks $B_0,\dots,B_k$ for $S$. Each disk $B_i$ defines a sequence of band moves between surfaces $S^i_0,\dots,S^i_j$ such that $B_i$ is horizontal in the final $S^i_j$.

Consecutive disks $B_i, B_{i+1}$ are disjoint so the band moves defined by $B_i$, $B_{i+1}$ are disjoint and thus commute with each other.  In other words, performing the first $j$ band moves defined by $B_i$, and the first $k$ band moves defined by $B_{i+1}$ in any order produces the same flat surface, which we will call $S^i_{j,k}$. Under this convention, $S^{i-1}_{0,j} = S^i_j = S^i_{j,0}$. 

If any one of these surfaces $S^i_{j,k}$ is essential, then we have found our essential representative for $S$. Otherwise, we will choose a horizontal disk $D^i_{j,k}$ for each $S^i_{j,k}$. If $j$ and $k$ are the highest indices in the sequences defined by $B_i$, $B_{i+1}$ then we will let $D^i_{j,h} = B^i$ for $h \leq k$ and $D^i_{h,k} = B^{i+1}$ for $h < j$.

By construction, any two surfaces $S^i_{j,k}$, $S^i_{j+1,k}$ or $S^i_{j,k}$, $S^i_{j,k+1}$ are related by a band move. As in the index one case, if there is no horizontal disk common to both surfaces then stopping the band move half way produces an essential surface with one flipped square and we're done. Otherwise, assume that there is a horizontal disk common to every such pair of surfaces, and choose a disk for each pair. To avoid excessive notation, we will not label this disk, but instead say that it is the disk \textit{associated to} the band move.

Every square of surfaces $S^i_{j,k}$, $S^i_{j+1,k}$, $S^i_{j+1,k+1}$, $S^i_{j,k+1}$ thus defines a loop of up to eight disks -- The four disks $D^i_{j,k}$, $D^i_{j+1,k}$, $D^i_{j+1,k+1}$, $D^i_{j,k+1}$ and the four disks associated to the band moves. If there is a horizontal disk $D'^i_{j,k}$ common to all four of the surfaces then $D'^i_{j,k}$ is disjoint from all eight disks. Because the disk complex is flag, the edges from $D'^i_{j,k}$ to these disks defines a collection of triangles forming a disk bounded the loop. If there is such a disk for every square $S^i_{j,k}$, $S^i_{j+1,k}$, $S^i_{j,k+1}$ then the union of all these disks form a single immersed disk whose boundary is the loop $\{B_i\}$, contradicting the assumption that this loop is homotopy non-trivial.

Thus we will assume that there is some square of surfaces $S^i_{j,k}$, $S^i_{j+1,k}$, $S^i_{j,k+1}$, $S^i_{j+1,k+1}$ such that there is no horizontal disk common to all four surfaces. The square is defined by two disjoint band moves between the four surfaces and similarly to the index-one case, stopping both band moves mid-way produces an index-two surface with two flipped squares. Thus $S$ is isotopic to an essential flat surface with up to two flipped squares.
\end{proof}

\section{A two-sided distance bound}
\label{twosideboundsect}

By Lemma~\ref{essdistlem}, if the essential flat surface that results from Lemma~\ref{diskisotopylem} has no flipped squares then we have the desired distance bound. Otherwise, we need a version of this lemma that includes flipped squares.

\begin{Lem}
\label{essdist2lem}
Let $T \subset M$ be a surface such that $S = T \cap N$ is an essential flat surface with zero or more flipped squares. If $T \cap H$ is a non-empty collection of essential disks and $T \cap U$ is a non-empty collection of essential annuli and M\"obius bands then $d(O) \leq 1-\chi(T)$.
\end{Lem}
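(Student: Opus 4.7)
The plan is to follow the proof of Lemma~\ref{essdistlem} essentially verbatim, modified only to handle the flipped squares of $S$. Parameterize $N$ so that $\Sigma_0 = \Sigma \times \{0\}$ is adjacent to $U$ and $\Sigma_1 = \Sigma \times \{1\}$ is adjacent to $H$, so that the loops of $S \cap \Sigma_0$ lie in $\mathcal{O}$ and those of $S \cap \Sigma_1$ lie in $\mathcal{H}$. Let $t_1 < \cdots < t_k$ be the levels containing either an ordinary horizontal subsurface or a flipped square of $S$, and let $0 = t'_0 < t'_1 < \cdots < t'_k = 1$ be intermediate levels; choose a loop $\ell_i$ in $S \cap \Sigma_{t'_i}$ for each $i$.

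The new ingredient is the analysis at a flipped-square level $t_i$. By the definition of a flipped square, the four vertical annuli just above $\Sigma_{t_i}$ meet the four just below at the four corner points, so each vertical annulus passing through the flipped-square region projects to the same simple closed curve in $\Sigma$ on either side of $t_i$. The vertical annuli disjoint from the flipped square are also unchanged across $t_i$, so the multicurves $S \cap \Sigma_{t'_{i-1}}$ and $S \cap \Sigma_{t'_i}$ coincide as subsets of $\Sigma$. We may therefore choose $\ell_{i-1} = \ell_i$, so that crossing a flipped-square level contributes $0$ to the edge-path length in $\mathcal{C}(\Sigma)$.

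With this in hand, the argument proceeds as in Lemma~\ref{essdistlem}: let $j$ be the smallest index such that some loop of $S \cap \Sigma_{t'_j}$ bounds a disk in $T$, push this disk into the handlebody on the positive side (the alternative of a geometric compression of $O$ gives $d(O) = 0$ immediately), and obtain a vertex of $\mathcal{H}$ at distance at most $j - f$ from $\mathcal{O}$ in $\mathcal{C}(\Sigma)$, where $f$ is the number of flipped-square levels at or below $t'_j$. Each of the $j-1$ horizontal levels below $t'_{j-1}$ contributes Euler characteristic at most $-1$ to the essential subsurface of $T$ below $t'_{j-1}$: ordinary levels by the argument in Lemma~\ref{essdistlem}, and flipped-square levels because a horizontal subsurface of the form $F \cup D$ with $F \cap D$ four points satisfies $\chi(F \cup D) = \chi(F) + 1 - 4 \leq -1$ in any admissible tight configuration. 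Summing over the horizontal levels gives $j - 1 \leq -\chi(T)$, hence $d(O) \leq j - f \leq 1 - \chi(T)$.

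The main obstacle will be rigorously justifying that a flipped-square level contributes nothing to the curve-complex path length, i.e.\ confirming that the multicurves above and below the flipped square genuinely agree in $\Sigma$ (rather than merely sharing four intersection points) and that a common essential loop $\ell_{i-1} = \ell_i$ can always be chosen. This rests on the fact that each vertical annulus extends continuously through the four corner points of the flipped square. A secondary technical point is the verification that $\chi(F \cup D) \leq -1$ for every flipped-square subsurface that can arise, which amounts to excluding pathological configurations in which $F$ is a disjoint union of disks; this should follow from the tightness of $S$ together with the essentiality hypothesis that rules out trivial horizontal loops.
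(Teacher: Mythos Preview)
Your central claim---that the multicurves $S \cap \Sigma_{t'_{i-1}}$ and $S \cap \Sigma_{t'_i}$ coincide in $\Sigma$ across a flipped-square level---is false, and this is precisely the obstacle you flagged in your last paragraph. A flipped square arises by stopping a band move halfway (Figure~\ref{flippedbandfig}): the vertical annuli \emph{below} the flipped-square level lie over the curves before the band sum, while those \emph{above} lie over the curves after the band sum. These are genuinely different curves in $\Sigma$. The definition says only that the annuli above and below \emph{intersect} in the four corner points of the square, not that they project to the same loops; in fact no vertical annulus ``passes through'' the flipped square---the annuli terminate at that level and what continues on the other side lies over a different curve. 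So a flipped-square level does \emph{not} contribute zero to the path length, and you cannot simply take $\ell_{i-1}=\ell_i$.

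The paper's argument handles this level quite differently. One considers the two resolutions $S^-$, $S^+$ of the flipped square (completing or undoing the band move); if either is essential, recurse with fewer squares. If both are inessential, one analyzes the resulting trivial loops and concludes that the curves above and below the level consist of pairs of parallel loops meeting in at most $n$ points, where $n$ is the number of flipped squares at that level. Their distance in $\mathcal{C}(\Sigma)$ is then at most $1+\log_2 n \leq n+1$, while the horizontal subsurface at that level has Euler characteristic at most $-(n+1)$, so the distance contribution is still dominated by the Euler-characteristic contribution. Your overall shape (follow Lemma~\ref{essdistlem} and bound the extra cost at flipped-square levels) is correct, but the actual bound at those levels requires this resolution-and-intersection-number argument, not the ``curves agree'' shortcut.
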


\begin{proof}
As in the proof of Lemma~\ref{essdistlem}, we will let $t_1,\dots,t_k$ be the levels such that each $\Sigma_{t_i}$ contains a horizontal subsurface of $S$. Let $0 = t'_1,t'_1,\dots,t'_{k+1}=1$ be the levels between consecutive horizontal subsurfaces. We need only check that in the case when a horizontal subsurface $S \cap \Sigma_{t_i}$ contains one or more flipped squares, the distance from $S \cap \Sigma_{t_{i-1}}$ to $S \cap \Sigma_{t_i}$ is at most $-\chi(S \cap \Sigma_{t_i})$.

For each flipped square in $S$, there are two surfaces $S^-$, $S^+$ that come from ``resolving'' the flipped square in $S$, by either completing the band move that created it, or undoing the beginning of the band move. If either of $S^-$, $S^+$ is an essential flat surface then we can apply the argument to this surface, which will have one fewer flipped squares. Thus we can assume that each of $S^-$, $S^+$ contains an inessential vertical annulus.

To find the horizontal loops of $S_-$ or $S_+$, we pinch two parallel sides of the flipped square together, then isotope the resulting loops to remove the two resulting bigons. If both $S_-$ and $S_+$ are inessential then pinching along each pair of parallel edges must create a trivial loop. Pinching a loop to itself along a non-trivial arc cannot produce a trivial loop. If pinching two loops together creates a trivial loop then the two original loops must be parallel. Thus the loops involved in the flipped square consist of two pairs of parallel loops so that each type of loop intersects the flipped square in exactly one arc.

We conclude that every loop in $S \cap \Sigma_{t'_i}$ is disjoint from every loop in $S \cap \Sigma_{t'_{i+1}}$, except for pairs of parallel loops with intersection number at most the number $n$ of flipped squares, making the curves distance at most $1 + \log_2(n) \leq n + 1$. On the other hand, the Euler characteristic of $S \cap \Sigma_{t_i}$ is less than the $-(1 + n)$ so the total distance from $S \cap \Sigma_0$ to $S \cap \Sigma_1$ is at most $1 - \chi(S)$.
\end{proof}

\begin{proof}[Proof of Theorem~\ref{2sidedthm1}]
Let $T'$ be a Heegaard surface for $M$ with genus $g$ such that $2g \leq d(O)$. Let $E$ be the result of weakly reducing (and possibly destabilizing) a path in $\mS(M, H)$ representing $T'$. If $E$ has a local minimum then let $T$ be a surface representing this minimum. In this case, $T$ is incompressible and has genus strictly less than that of $T'$, so the genus bound follows from Theorem~\ref{mainthm1}.

Otherwise, let $T$ be the unique local maximum in the path $E$. Then $T$ is index one and either isotopic to $T'$ or a destabilization of $T'$. By Lemma~\ref{indexesslem}, the intersection $T \cap N$ is isotopic to an essential flat surface, possibly with one flipped square. The surface $T$ has genus at most $g$ so by Lemma~\ref{essdist2lem}, if $T$ intersects both $U$ and $H$ non-trivially then $d(O) \leq 1 - \chi(T) \leq 2g - 1$. Thus $T$ must be disjoint from one of these surfaces.

If $T$ is disjoint from $U$ then $T$ is contained in the handlebody $M \setminus U$.  Because the boundary of a handlebody is compressible, it contains no closed incompressible surfaces (other than trivial spheres) or index-one surfaces, as proved by Casson-Gordon~\cite{cg}. (In fact, it follows from~\cite{bach:index} that no surface in a handlebody has a well-defined index.) Note that the minimal Heegaard surface for a handlebody has disks on only one side, so the disk complex for this surface is contractible.

If $T$ is disjoint from $H$ then it is contained in an interval bundle $N'$ isotopic to $U$. The only closed incompressible surfaces in an interval bundle are the one-sided midpoint surface and a boundary-parallel surface. By Casson and Gordon's Theorem~\cite{cg:red}, if we completely compress a strongly irreducible surface in either direction, the result is either empty or an incompressible surface. Since the only two-sided incompressible surface is boundary parallel, every index-one surface in $N'$ is a Heegaard surface. 

Let $D_1$ be a compressing disk for $S$ on the side of the surface that contains $\partial N'$. As in Lemma~\ref{indexesslem}, $D_1$ defines a sequence of band moves that make the disk horizontal. If the first band move produces an essential surface then we let $S_1$ be this surface and continue performing band moves until we find a surface $S_k$ with a horizontal compressing disk (not necessarily isotopic to $D_1$). Let $S_{k+1}$ be the result of compressing $S_k$ along this horizontal disk. If $S_{k+1}$ is compressible to the side containing $\partial N'$, let $D_2$ be a compressing disk on this side of $S_{k+1}$ and perform the sequence of band moves defined by $D_2$, until we can again compress along a horizontal disk.

If we continue in this way, we will eventually produce an incompressible surface $S_n$. Since the compressions were on the same side as $\partial N'$, the surface $S_n$ must be isotopic to $\partial N'$. We can reconstruct $S$ from $S_n$ by attaching a vertical tube dual to the final compression, then isotoping the surface around, possibly attaching another vertical tube dual to the previous compression and so on. Attaching the first vertical tube produces a Heegaard surface for $N'$. By Proposition~22 in~\cite{me:stabs}, if we attach more tubes to this Heegaard surface, then the resulting Heegaard surface is a stabilization of the original, and thus is not strongly irreducible. Thus we construct $S$ from $S_n$ by attaching a single tube, and $S$ is the standard Heegaard surface for $N'$. This standard Heegaard surface is isotopic to $\Sigma$, so the original Heegaard surface $\Sigma'$ is a either isotopic to $\Sigma$ or a stabilization of $\Sigma$.
\end{proof}

\section{Mapping class groups}
\label{mcgsect}

The Heegaard surface $\Sigma'$ induced by $O$ is defined by an arc $\alpha \subset U$ that intersects $O$ in a single point $p \in O$. Any path from $p$ to itself in $O$ defines an element of $\pi_1(O)$, but it also defines an isotopy of $\alpha$ within $U$ (possibly permuting its endpoints), which in turn defines an isotopy of $\Sigma$. Thus there is a homomorphism from $\pi_1(O)$ into $Isot(M, \Sigma)$. We will show that this homomorphism is the inverse of an isomorphism from $Isot(M, \Sigma)$ onto $\pi_1(O)$.

\begin{proof}[Proof of Theorem~\ref{2sidedthm2}]
As noted in~\cite{me:calc}, every element of the isotopy subgroup $Isot(M, \Sigma')$ of $Mod(M, \Sigma')$ is defined by a path in $\mS(M, H)$ in which the local maxima have index two, the local minima have index zero or one, and all the surfaces are isotopic to $\Sigma'$ in $M$.

If the path has an index-two maximum, let $T$ be an essential flat representative, possibly with one or two flipped squares, of this index-two vertex. By Lemma~\ref{essdist2lem}, this surface must be disjoint from either $U$ or $H$. However, as noted in the proof of Theorem~\ref{2sidedthm1}, $T$ cannot be disjoint from $U$. Moreover, if $T$ is disjoint from $H$ then it is either parallel to $\partial U$ or a standard Heegaard surface for the interval bundle $M \setminus H$. These surfaces have index zero and one, respectively, so $T$ cannot be either of these. Thus there cannot be an index-two maximum in the sequence of paths defining the element of $Isot(M, \Sigma')$.

This implies that the isotopy keeps $\Sigma'$ disjoint from the handlebody $H$ and contained in an interval bundle $U' = M \setminus H$. The compression body component of $U' \setminus \Sigma'$ intersects the one-sided surface $O$ in a single disk $E$ and any (ambient) isotopy of $\Sigma'$ induces an isotopy of $O$ that moves $E$ around in $O$ then returns $E$ to itself. The path of a base point in $E$ defines a unique element of $\pi_1(O)$, determining a homomorphism from $Isot(M, \Sigma')$ into $\pi_1(O)$. Every element of $\pi_1(O)$ extends to an isotopy of $E$ in $O$, which can be further extended to an element $Isot(M, \Sigma')$, so this homomorphism is onto. 

To check that the homomorphism is one-to-one, note that any isotopy that fixes the base point in $E$ can be conjugated to fix $E$ setwise. Any isotopy of $\Sigma'$ that fixes the compressing disk $E$ induces an isotopy of the surface $\Sigma$ that results from compressing along $E$. However, $\Sigma$ is boundary parallel in $U'$ so there are no non-trivial isotopies of $\Sigma$. The isotopy must be trivial on $\Sigma'$, so the homomorphism is one-to-one and $Isot(M, \Sigma)$ is isomorphic to $\pi_1(O)$ by the expected map.
\end{proof}

\bibliographystyle{amsplain}
\bibliography{oneside}

\end{document}